\newcommand{\inr}[1]{\bigl< #1 \bigr>}
\newcommand{\norm}[1]{\left\|#1\right\|}%
\newcommand\eps{\epsilon}
\DeclareMathOperator*{\argmin}{argmin}
\newcommand{\design}{\mathbb{X}}
\newcommand{\hbeta}{{\boldsymbol{\hat\beta}}}
\def\ds1{\textrm{1\kern-0.25emI}} 
\newcommand \E{\mathbb{E}}
\newcommand \R{\mathbb{R}}
\newcommand \cB{{\cal B}}
\newcommand \cK{{\cal K}}
\newcommand \cN{{\cal N}}
\newcommand \cP{{\cal P}}
\newcommand \bA{{\mathbb A}}
\newcommand \bB{{\mathbb B}}
\newcommand \bC{{\mathbb C}}
\newcommand \bE{{\mathbb E}}
\newcommand \bP{{\mathbb P}}
\newcommand \bR{{\mathbb R}}
\newcommand \bX{{\mathbb X}}
\newcommand{\vf}{{\mathbf{f}}}
\newcommand{\vbeta}{{\boldsymbol{\beta}}}
\newcommand{\veta}{{\boldsymbol{\eta}}}
\newcommand{\vv}{{\boldsymbol{v}}}
\newcommand{\vzero}{{\boldsymbol{0}}}
\newcommand{\vx}{{\boldsymbol{x}}}
\newcommand{\vu}{{\boldsymbol{u}}}
\newcommand{\vw}{{\boldsymbol{w}}}
\newcommand{\vy}{\mathbf{y}}
\newcommand{\vxi}{{\boldsymbol{\xi}}}
\newcommand{\vdelta}{{\boldsymbol{\delta}}}
\title{\bf Towards the study of least squares estimators with convex penalty}
\author{Pierre~C.~Bellec$^2$, Guillaume~Lecu\'e$^1$, Alexandre~B.~Tsybakov$^1$}
\date{$^1$ CREST-ENSAE, CNRS UMR9194 \\ $^2$ Rutgers University}
\declaretheorem[name=Theorem,numberwithin=section]{thm}
\declaretheorem[name=Theorem,sibling=thm]{theorem}
\declaretheorem[name=Proposition,sibling=thm]{proposition}
\declaretheorem[name=Lemma,sibling=thm]{lemma}
\declaretheorem[name=Corollary,sibling=thm]{corollary}
\declaretheorem[name=Assumption,sibling=thm]{assumption}
\declaretheorem[name=Definition,sibling=thm]{definition}
\newtheoremstyle{named}{}{}{\itshape}{}{\bfseries}{.}{.5em}{\thmnote{#3} #1}
\theoremstyle{named}
\numberwithin{equation}{section}
\begin{document}

\maketitle

\abstract{\footnotesize
Penalized least squares estimation is a popular technique in high-dimensional statistics. It  includes such methods as the LASSO, the group LASSO, and the nuclear norm penalized least squares. The existing theory of these methods is not fully satisfying since it allows one to prove oracle inequalities with fixed high probability only for the estimators depending on this probability. Furthermore, the control of compatibility factors appearing in the oracle bounds is often not explicit. Some very recent 
developments suggest that the theory of oracle inequalities can be revised in an improved way. In this paper,  we provide an overview  of ideas and tools leading to such an improved theory. We show that, along with overcoming the disadvantages mentioned above, the methodology extends to the hilbertian framework and it applies to a large class of convex penalties.  This paper is partly expository. In particular, we provide adapted proofs of some results from other recent work.

}

\section{Introduction}

Penalized least squares (LS) estimators play an important role in statistics. One of the classical examples is ridge regression estimator, for which the penalty is defined as the squared Euclidean norm. 
More recently, a  great deal of attention has been focused on high-dimensional statistical models. In this context, some new penalized LS estimators have been proposed and extensively studied. The most famous examples are the  LASSO (i.e., the $\ell_1$ norm penalized estimator) and its generalizations such as the group LASSO or the nuclear norm penalized least squares for matrix estimation. A common feature of these and related estimators is the fact that the penalty is a norm satisfying some specific decomposability conditions. Starting from \cite{bickel2009simultaneous}, there has been a considerable interest in developing a general approach to the analysis of such methods. For a detailed account, we refer the reader to \cite{koltch_book,giraud2014introduction,vdgeer} where further references can be found. As shown in~\cite{bickel2009simultaneous}, the two main ingredients of the analysis are geometric considerations based on  the restricted eigenvalue (compatibility) property, and the empirical process bounds on the stochastic error. With this approach, several techniques have been proposed for a unified treatment of LS estimators with decomposable penalties, see the overviews in \cite{negahban2012,taylor2013,vdgeer}.

 However, the existing theory is not fully satisfying in the following aspects.  
 \begin{itemize}
 \item[(i)] The results are obtained in the form of oracle inequalities depending on the restricted eigenvalue (compatibility) parameters that are, in general, not specified. An exception is the standard LASSO, for which the values of these parameters are evaluated in some situations.  
 \item[(ii)] 
     The penalties (and thus, the estimators) considered  in that theory depend on the confidence level (the probability), with which the oracle inequality holds. In other words, there is no means, in that framework, to provide oracle bounds for one given penalized LS estimator with any given confidence level.  As a consequence, oracle inequalities for the mean squared risk or {upper bounds on any other moments of the risk} are not derivable from these results. 
\end{itemize}

Very recent developments show that, in some cases, the problems (i)
and (ii) can be resolved. For (i),  a relatively general solution can be  obtained by the small ball method of \cite{MR3431642,MR3367000}.  It has been already successfully implemented for such procedures as LASSO and SLOPE  \cite{Dirksen2015OnTG,reg1,bellec2016slope}.

Techniques to overcome the disadvantage (ii) have been recently proposed in \cite{bellec2016slope,bel-tsyb}. The argument in \cite{bellec2016slope} is based on a refined bound for the stochastic error, and the results focus on the LASSO and SLOPE estimators. Thanks to these techniques, an improvement of the classical rates is obtained for the prediction and estimation errors. In \cite{bel-tsyb}, a different argument is used to resolve the problem described in (ii). The results are valid only for the prediction error but extend to other penalized LS estimators than LASSO and SLOPE. The proof is based on a Lipschitz property of the solutions and the Gaussian concentration theorem. 

  In view of these developments, the theory of oracle inequalities for penalized LS estimators can be revised in an improved way. In this paper,  we provide an overview  of ideas and tools leading to such an improved theory. Along with overcoming the disadvantages mentioned in (i) and (ii), the method extends to the hilbertian framework and applies to a large class of convex penalties.  The approach is based on a refinement of the argument in \cite{bellec2016slope}. This paper is partly expository. In particular, we provide adapted proofs of some results from the previous work.


\section{Statement of the problem} 

Assume that we observe the vector
\begin{equation*}
    \vy = \vf + \vxi,
\end{equation*}
where $\vf\in\R^n$ is an unknown deterministic mean
and $\vxi\in\R^n$ is a random noise vector. Let $\sigma>0$. We assume that $\vxi$ has normal distribution $\cN(\vzero,\sigma^2 I_{n\times n})$, where  $I_{n\times n}$ denotes the $n\times n$ identity matrix.

For all $\vu = (u_1,\dots,u_n)\in\R^n$, define the empirical norm of $\vu$
by
\begin{equation*}
    \norm{\vu}_n^2 = \frac 1 n \sum_{i=1}^n u_i^2.
\end{equation*}

Let $H$ be a Hilbert space with the inner product $\langle \cdot, \cdot \rangle$ and the corresponding norm $\Vert \cdot \Vert_H$. Let $\mathbb B$ a convex subset of $H$ such that $\mathbb B$ is a closed set
with respect to $\Vert \cdot \Vert_H$.
We study the performance of
the estimator $\hbeta$ defined as a solution of the following
minimization problem:
\begin{equation}\label{eq:def-hbeta}
    \hbeta \in \argmin_{\vbeta\in \mathbb B}
    \left(
        \norm{\design\vbeta - \vy}_n^2
        + F(\vbeta)
    \right)
\end{equation}
where $\design:H\to \R^{n}$ is a linear operator and $F:H\to \R$ is a convex function called a penalty. Our main results will be given for the case when $F(\vbeta)$ is some norm of $\vbeta$. The value $\design\hbeta$ is used as a prediction for $\vf$. If the model is well-specified, that is $\vf=\design\vbeta^*$ for some $\vbeta^*\in \mathbb B$, then
$\hbeta$ is used as an estimator of $\vbeta^*$.


\section{Basic tools}

In this section, we provide two basic facts that are used in the subsequent argument. The first of them is the following proposition generalizing \cite[Proposition E.3]{bellec2016slope} that plays a role of a ``reduction lemma'' for the stochastic error term. It is crucial to overcome the disadvantage (ii) mentioned in the Introduction.

A mapping $h:H\rightarrow [0,\infty)$ will be called {positive} homogeneous if
$h(a\vu) = a h(\vu)$ for all $a\geq0, \vu\in H$
    and $h(\vu)>0$ for $\vu\ne \vzero$.
    Denote by $\Phi(\cdot)$ the cumulative distribution function of the standard Gaussian law.

{
\begin{proposition}
    \label{prop:deviation-abstract-norm-N}
    Assume that $\vxi\sim \mathcal N(\vzero,\sigma^2 I_{n\times n})$.
    Let $h:H\rightarrow [0,+\infty)$ be a {positive} homogeneous mapping and let $\tau>0$.    Assume that the event
    \begin{equation*}
        \Omega \triangleq \left\{
                \sup_{\vv\in H: h(\vv) {\le} 1} \frac 1 n  \vxi^T\design\vv
                \le \tau
        \right\}
    \end{equation*}
    satisfies $\mathbb P(\Omega) \ge 1/2$.
    Then for all $\delta\in(0,1)$ we have
    \begin{equation*}
        \mathbb P\left(
            \forall \vu\in H:
            \quad
        \frac 1 n \vxi^T\design\vu
        \le (\tau+1) \max\left(h(\vu), \sigma \frac{\Phi^{-1}(1-\delta)}{\sqrt{n}}  \norm{\design\vu}_n \right)
        \right)
        \ge 1 - \delta.
        \label{eq:new-deviation-abstract-norm-N}
    \end{equation*}
\end{proposition}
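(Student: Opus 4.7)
The plan is to combine a homogenization argument with Gaussian concentration applied to a truncated supremum.

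\medskip

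\textbf{Step 1: Homogenization.} Both $h$ and $\vu \mapsto \norm{\design \vu}_n$ are positive homogeneous, hence so is $M(\vu) \triangleq \max\bigl(h(\vu),\, c \norm{\design\vu}_n\bigr)$ where $c \triangleq \sigma \Phi^{-1}(1-\delta)/\sqrt{n}$. Since $\vu \mapsto \frac{1}{n}\vxi^T\design\vu$ is linear, the desired inequality $\frac{1}{n}\vxi^T\design\vu \le (\tau+1) M(\vu)$ for all $\vu$ with $M(\vu) > 0$ is equivalent, by scaling $\vu$ by $1/M(\vu)$, to the single bound
\begin{equation*}
Z(\vxi) \;\triangleq\; \sup_{\vu \in H:\, h(\vu)\le 1,\; \norm{\design \vu}_n \le 1/c} \frac{1}{n} \vxi^T \design \vu \;\le\; \tau+1.
\end{equation*}
The case $M(\vu) = 0$ forces $h(\vu) = 0$, i.e. $\vu = \vzero$, and the inequality is trivial.

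\medskip

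\textbf{Step 2: Control of the median of $Z$.} On $\Omega$, one has $Z(\vxi) \le \sup_{h(\vv)\le 1} \frac{1}{n}\vxi^T\design\vv \le \tau$ because the supremum defining $Z$ is taken over a smaller set. Since $\bP(\Omega) \ge 1/2$, it follows that the median $m$ of $Z(\vxi)$ satisfies $m \le \tau$.

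\medskip

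\textbf{Step 3: Lipschitz constant of $Z$.} For any $\vxi,\vxi' \in \R^n$,
\begin{equation*}
\bigl|Z(\vxi) - Z(\vxi')\bigr| \;\le\; \sup_{\norm{\design\vu}_n \le 1/c} \frac{1}{n} \bigl|(\vxi-\vxi')^T \design\vu\bigr| \;\le\; \frac{1}{n}\,\|\vxi - \vxi'\|_2 \cdot \sup \|\design\vu\|_2 \;\le\; \frac{1}{c\sqrt n}\|\vxi-\vxi'\|_2,
\end{equation*}
using Cauchy--Schwarz and $\|\design\vu\|_2 = \sqrt{n}\,\norm{\design\vu}_n \le \sqrt{n}/c$. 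Hence $\vxi\mapsto Z(\vxi)$ is Lipschitz with constant $L = 1/(c\sqrt n)$ on $\R^n$ equipped with the Euclidean norm.

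\medskip

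\textbf{Step 4: Gaussian concentration.} Write $\vxi = \sigma \vg$ with $\vg \sim \cN(\vzero,I_{n\times n})$; then $\vg\mapsto Z(\sigma\vg)$ is $\sigma L$-Lipschitz, with $\sigma L = \sigma/(c\sqrt n) = 1/\Phi^{-1}(1-\delta)$. The Borell--Tsirelson--Ibragimov--Sudakov inequality for Gaussian measures then gives, for every $t>0$,
\begin{equation*}
\bP\bigl(Z(\vxi) \ge m + t\bigr) \;\le\; 1 - \Phi\bigl(t \cdot \Phi^{-1}(1-\delta)\bigr).
\end{equation*}
Choosing $t = 1$ and using $m\le \tau$ yields $\bP(Z(\vxi) \ge \tau + 1) \le 1-\Phi(\Phi^{-1}(1-\delta)) = \delta$, which by Step~1 is precisely the claim.

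\medskip

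The main conceptual point (and the only place where care is needed) is Step~1: recognizing that truncating the sup by the extra constraint $\norm{\design\vu}_n \le 1/c$ is exactly what makes $Z$ Lipschitz in $\vxi$, while positive homogeneity lets one recover a bound valid for all $\vu$ from the truncated sup. Everything else is a routine application of Borell--TIS together with the assumption $\bP(\Omega)\ge 1/2$ to bypass estimating $\bE Z$.
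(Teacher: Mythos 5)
Your proof is correct and follows essentially the same route as the paper's: restrict by homogeneity to the set where $\max\bigl(h(\vu), c\norm{\design\vu}_n\bigr)\le 1$, bound the median of the truncated supremum by $\tau$ via $\mathbb P(\Omega)\ge 1/2$, and apply the sharp Gaussian concentration inequality around the median with Lipschitz constant $1/\Phi^{-1}(1-\delta)$. The only cosmetic difference is that you make the rescaling step and the treatment of $M(\vu)=0$ explicit, which the paper leaves implicit.
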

\begin{proof}
    By homogeneity, it is enough to consider only $\vu\in H$ such that
    $$\max\left(h(\vu), \norm{\design\vu}_n / L \right) = 1$$
    \text{where}
    $L \triangleq \sqrt{n}/ (\sigma\Phi^{-1}(1-\delta))$.
    Define $T\subset H$ and $f:\R^n\rightarrow \R$ by
    \begin{equation}
        T \triangleq \left\{\vu\in H: \max\left(h(\vu), \frac 1 L \norm{\design\vu}_n\right)   \le 1\right\},
        \qquad
        f(\vv) \triangleq
        \sup_{\vu\in T}
        \frac 1 n (\sigma \vv)^T\design\vu
        \label{eq:def-T-f}
    \end{equation}
    for all $\vv\in\R^n$.
  Then, for every $\vv_1,\vv_2\in\R^n$, $|f(\vv_1) - f(\vv_2)|\leq \left((\sigma L)/\sqrt{n}\right)|\vv_1-\vv_2|_2$ where $|\cdot|_2$ denotes the Euclidean norm onto $\R^n$. Therefore, $f$ is a Lipschitz function with Lipschitz constant $\sigma L/\sqrt n$ and by the Gaussian concentration inequality, cf., e.g., \cite[Theorem 6.2]{lifshits}, we have
   that with probability at least $1-\delta$,
    \begin{align*}
        \sup_{\vu\in T}
        \frac 1 n \vxi^T\design\vu
        &\le
        {\rm Med}
        \left[
                \sup_{\vu\in T}
                \frac 1 n \vxi^T\design\vu
        \right]
        + \sigma L \frac{\Phi^{-1}(1-\delta)}{\sqrt{n}}
        \\
       &
       { \le
         {\rm Med} \left[
                \sup_{\vu\in\R^p: h(\vu) {\le} 1}
                \frac 1 n \vxi^T\design\vu
        \right]
        + \sigma L  \frac{\Phi^{-1}(1-\delta)}{\sqrt{n}}
        }
        \\
        &\le
        \tau
        + \sigma L  \frac{\Phi^{-1}(1-\delta)}{\sqrt{n}} = \tau+1,
    \end{align*}
    where  ${\rm Med} [\zeta]$ denotes the median of random variable $\zeta$ and we have used the fact that $\mathbb P(\Omega)\ge 1/2$ to bound the median from above.
\end{proof}


The next proposition is a simple fact based on convexity argument. In different versions, it was used as an element of the proof of oracle inequalities with leading constant 1 starting from \cite{koltchinskii2011nuclear}. Some special cases of it are explicitly stated in \cite[Lemma 1]{bellec2016prediction}
    and \cite[Lemma A.2]{bellec2016slope}.

\begin{proposition}\label{prop2}
    Let $F:H\rightarrow\R$ be a convex function, and
    let $\design:H\to \R^n$ be a linear operator.
    If $\hbeta$ is a solution of the minimization problem \eqref{eq:def-hbeta},
    then $\hbeta$ satisfies, for all $\vbeta\in\mathbb B$ and all $\vf\in\R^n$,
    \begin{equation}
        \|\design\hbeta - \vf\|^2_n
        -
        \norm{\design\vbeta - \vf}^2_n
        \le
        \frac 2 n \vxi^T (\design(\hbeta - \vbeta)) 
        + F(\vbeta)
        - F(\hbeta)
        - \|\design(\hbeta - \vbeta)\|^2_n.
        \label{eq:almost-sure}
    \end{equation}
\end{proposition}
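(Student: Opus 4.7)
The plan is to reduce the desired inequality to a first-order optimality condition at $\hbeta$, and then derive that condition from convexity of the objective.

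First, I would invoke the elementary algebraic identity $\|a\|_n^2 - \|b\|_n^2 + \|a-b\|_n^2 = \frac{2}{n} a^T(a-b)$, valid for any $a,b \in \R^n$, applied with $a = \design\hbeta - \vf$ and $b = \design\vbeta - \vf$, so that $a-b = \design(\hbeta - \vbeta)$. Combined with the decomposition $\design\hbeta - \vf = (\design\hbeta - \vy) + \vxi$ (which is simply $\vxi = \vy - \vf$), this rewrites the target inequality \eqref{eq:almost-sure} as the equivalent claim
\[
\frac{2}{n}(\design\hbeta - \vy)^T \design(\hbeta - \vbeta) \le F(\vbeta) - F(\hbeta).
\]

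Second, I would establish this last inequality as a first-order optimality condition for \eqref{eq:def-hbeta}. For $t \in (0,1]$, convexity of $\mathbb B$ gives $\vbeta_t := \hbeta + t(\vbeta - \hbeta) \in \mathbb B$, so by optimality of $\hbeta$ we have $\|\design\hbeta - \vy\|_n^2 + F(\hbeta) \le \|\design\vbeta_t - \vy\|_n^2 + F(\vbeta_t)$. Expanding $\|\design\vbeta_t - \vy\|_n^2 = \|\design\hbeta - \vy\|_n^2 + \frac{2t}{n}(\design(\vbeta - \hbeta))^T(\design\hbeta - \vy) + t^2\|\design(\vbeta - \hbeta)\|_n^2$ and using $F(\vbeta_t) \le (1-t)F(\hbeta) + tF(\vbeta)$ from convexity of $F$, then dividing by $t > 0$ and letting $t \to 0^+$ kills the quadratic-in-$t$ term and produces the displayed inequality.

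There is no serious obstacle; the argument is purely algebraic once one recognizes the right identity. Conceptually, the extra $-\|\design(\hbeta - \vbeta)\|_n^2$ term in \eqref{eq:almost-sure}, which goes beyond what the naive basic inequality (obtained by simply comparing the penalized objective at $\hbeta$ and $\vbeta$) would give, encodes the strong convexity of the squared-norm loss: it is exactly the gap between the quadratic loss at $\vbeta$ and its first-order Taylor expansion at $\hbeta$, made accessible via the optimality condition derived above.
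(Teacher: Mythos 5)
Your proof is correct, and it reaches the goal by a genuinely different route from the paper. Both arguments hinge on the same two ingredients: the algebraic identity $\|\design\hbeta-\vf\|_n^2-\|\design\vbeta-\vf\|_n^2=\frac{2}{n}(\design(\hbeta-\vbeta))^T(\design\hbeta-\vf)-\|\design(\hbeta-\vbeta)\|_n^2$ together with $\vxi=\vy-\vf$, and the first-order inequality $\frac{2}{n}(\design(\hbeta-\vbeta))^T(\design\hbeta-\vy)\le F(\vbeta)-F(\hbeta)$. The difference lies in how that first-order inequality is obtained. The paper works with subdifferential calculus: it invokes the abstract optimality condition $\langle\hbeta-\vbeta,\vw\rangle\le 0$ for some $\vw\in\partial f(\hbeta)$, applies the Moreau--Rockafellar theorem to split $\vw$ into the gradient of the smooth quadratic part plus a subgradient $\vv\in\partial F(\hbeta)$, and finishes with the subgradient inequality $\langle\vbeta-\hbeta,\vv\rangle\le F(\vbeta)-F(\hbeta)$. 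You instead use a direct perturbation along the segment $\vbeta_t=(1-t)\hbeta+t\vbeta$, expand the quadratic, use convexity of $F$, divide by $t$ and let $t\to0^+$. Your version is more elementary and entirely self-contained: it needs only convexity of $\mathbb B$ and of $F$, and it bypasses the sum rule for subdifferentials, which in an infinite-dimensional Hilbert space requires a qualification condition (satisfied here because the quadratic term is continuous, but a hypothesis you never have to check). The paper's version makes the subgradient $\vv\in\partial F(\hbeta)$ explicit, which can be convenient if one wants to reuse it, though it is not reused here. Your closing remark correctly identifies the $-\|\design(\hbeta-\vbeta)\|_n^2$ term as the strong-convexity gap that the naive comparison of objective values would miss.
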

\begin{proof}
    Define the functions $f$ and $g$ by the relations $g(\vbeta) = \norm{\design\vbeta - \vy}^2_n$,
    and $f(\vbeta) = g(\vbeta) + F(\vbeta)$ for all $\vbeta\in H$.
    Since $f$ is convex and $\hbeta$ is a minimizer of $f$ on $\mathbb B$, it follows that for some $\vw$ in the sub-differential of $f$ at $\hbeta$, we have
    $\langle \hbeta-\vbeta,\vw\rangle \le 0$ for all $\vbeta\in \mathbb B$,  cf., e.g., \cite{peypouquet2015convex}.
    Using the Moreau-Rockafellar theorem, we obtain that there exists $\vv$ in the sub-differential of $F$ at $\hbeta$ such that
    $ \langle \hbeta-\vbeta,\vw\rangle =
         \langle \hbeta-\vbeta,\frac2n \design^*(\design \hbeta - \vy)+\vv\rangle
        $ for all $\vbeta\in \bB$
   where $\design^*$ is the adjoint operator of $\design$. Thus,
    \begin{align*}
        \frac{2}{n} (\design(\hbeta - \vbeta))^T (\design \hbeta - \vy)
         \le
        \langle \vbeta - \hbeta, \vv\rangle.
    \end{align*}
   Note also that by simple algebra,
   \begin{align*}
        &\|\design\hbeta - \vf\|^2_n
        -
        \norm{\design\vbeta - \vf}^2_n
        =
        \frac{2}{n} (\design(\hbeta - \vbeta))^T (\design \hbeta - \vf)
        -
        \|\design(\hbeta - \vbeta)\|^2_n.
    \end{align*}
    Combining the last two displays we obtain
    \begin{align*}
        &\|\design\hbeta - \vf\|^2_n
        -
        \norm{\design\vbeta - \vf}^2_n
        \le
        \frac 2 n \vxi^T (\design(\hbeta - \vbeta))
        -
        \|\design(\hbeta - \vbeta)\|^2_n
        +  \langle\vbeta-\hbeta,\vv\rangle.
    \end{align*}
    To complete the proof, notice that by definition of the subdifferential of $F$ at $\hbeta$,
    we have
      $  \langle\vbeta-\hbeta,\vv\rangle \le F(\vbeta) - F(\hbeta).$
\end{proof}

\section{Oracle inequalities}

In this section, we consider a Hilbert space $H$ and a linear operator $\mathbb X:H\to \R^n$ defined by the relation
$$
\mathbb X \vbeta  = ( \langle \vbeta, X_1\rangle, \dots,  \langle \vbeta, X_n\rangle)^\top, \quad \forall \vbeta\in H,
$$
where $X_1, \ldots, X_n$ are deterministic elements of $H$.

We will also assume that $F(\vbeta)=\lambda\|\vbeta\|$ where $\|\cdot\|$ is a norm on $H$ (called the regularization norm) and $\lambda>0$ is a tuning constant.
Thus, the minimization problem \eqref{eq:def-hbeta} takes the form
\begin{equation}\label{eq:def-hbeta_1}
    \hbeta \in \argmin_{\vbeta\in \mathbb B}
    \left(
        \norm{\design\vbeta - \vy}_n^2
        + \lambda\|\vbeta\|
    \right)
\end{equation}
where $\mathbb B$ is a closed convex subset of $H$.

To each matrix $A\in H$, we associate a linear operator
$\cP_A: H\to H$. Examples of $\cP_A$ that are interesting in the context of high-dimensional statistics will be given later.
Set $\cP_A^{\perp}= I-\cP_A$ where $I$ is the identity operator on $H$.
The following assumption will be crucial for the subsequent argument.
\begin{assumption}\label{assum:decomp}
 There exists a subset $\mathbb A$ of $\mathbb B$ such that
\begin{equation*}\label{eq:assumption1_1}
{\cal P}_AA= A, \qquad \forall \ A\in \mathbb A,
\end{equation*}
\begin{equation}\label{eq:assumption1_2}
 \|A\| - \|B \|
    \le
    \|\mathcal P_A(A - B) \| - \|{\cal P}^\perp_A B \|
    ,
        \qquad \forall \ A\in \mathbb A, \ \forall \ B\in H.
\end{equation}
\end{assumption}
\noindent
Note that since $\mathcal P_A A = A$, inequality \eqref{eq:assumption1_2} can be rewritten as
\begin{equation}\label{eq:assumption1_3}
 \|A\| + \|{\cal P}^\perp_A B \|
    \le
    \|A - \mathcal P_A(B) \| + \| B \|
    ,
        \qquad \forall \ A\in \mathbb A, \ \forall \ B\in H.
\end{equation}
Looking at \eqref{eq:assumption1_3}, it is easy to check that Assumption \ref{assum:decomp} is satisfied if the following decomposability property holds. 
\begin{assumption}[Decomposability assumption]\label{assum:decomp2}
 There exists a subset $\mathbb A$ of $\mathbb B$ such that
\begin{equation*}\label{eq:assumption2_1}
{\cal P}_AA= A, \qquad \forall \ A\in \mathbb A,
\end{equation*}
\begin{equation}
    \label{eq:assumption2_2}
     \|A\| + \|{\cal P}^\perp_A B \|
    =
    \|A + {\cal P}^\perp_A B \| ,
    \qquad \forall \ A\in \mathbb A, \ \forall \ B\in H.  
\end{equation}
\end{assumption}
This decomposability assumption is satisfied, with suitable definitions of ${\cal P}_A$, 
for the three regularization norms $\|\cdot\|$ playing the central role in high-dimensional statistics: 
the $\ell_1$-norm, the group LASSO norm, and the nuclear norm.  They are analyzed in Section~\ref{sec:examples}.

Beyond the decomposable case, one may turn to other assumptions stated in terms of the ``size'' of sub-differential of the regularization norm, cf. \cite{reg1,luu2017sharp,vaiter2015model}.
The articles \cite{luu2017sharp,vaiter2015model} propose the following methodology to define projectors $\mathcal P_A$ and $\mathcal P_A^\perp$ for any norm $\|\cdot\|$.
Given a finite dimensional Hilbert space $H$ and a matrix $A\in H$, 
the subdifferential $\partial \|\cdot\|(A)$ of $\|\cdot\|$ at $A$ is a convex subset of $H$.
The set $\partial \|\cdot\|(A)$ is endowed with a unique affine envelope, i.e., the smallest affine subspace of $H$ that contains $\partial \|\cdot\|(A)$.
The affine envelope of $\partial \|\cdot\|(A)$ is a subset of $H$ of the form $e_A+V_A$ where $e_A\in H$ and $V_A$ is a linear subspace of $H$ ($V_A$ is sometimes referred to as the direction of the affine subspace).
Finally, the projector $\mathcal P_A^\perp$ is defined as the orthogonal projection onto $V_A$, and $\mathcal P_A$ is defined as the orthogonal projection onto $V_A^\perp$.
It can be shown that $\mathcal P_A(A) = A$ for any $A\in H$ and that the above definition of $\mathcal P_A$ and $\mathcal P_A^\perp$ yields the same projectors as those studied
in Section~\ref{sec:examples} for the Lasso, the Group-Lasso and the Nuclear norm estimators \cite{luu2017sharp,vaiter2015model}.
We refer the reader to \cite{luu2017sharp,vaiter2015model} for more details
on this general definition of $\mathcal P_A$ and $\mathcal P_A^\perp$.

To state the result, we will need some notation. For any $A\in H$ and any  constant $c_0>0$, define the following cone in ${\mathbb B}$:
$$
{\mathbb C}_{A,c_0}\triangleq \Bigl\{B \in {\mathbb B}: \|{\cal
P}_A^{\perp}B\|\leq c_0 \|{\cal P}_AB\|\Bigr\},
$$
and introduce the associated quantity that we will call the {\it compatibility factor}~:
\begin{equation}
\label{eq:re_constant}
    \mu_{c_0}(A)\triangleq  \inf\Bigl\{\mu'>0: \|{\cal P}_A B\| \leq \mu'
    \|{\mathbb X}B\|_{n}, \,\forall B\in {\mathbb C}_{A,c_0}\Bigr\}.
\end{equation}
Note that $\mu_{c_0}(A)$ is a nondecreasing function of $c_0 .$
\begin{theorem}
    \label{th1}
    Assume that $\vxi\sim \mathcal N(\vzero,\sigma^2 I_{n\times n})$, and that Assumption \ref{assum:decomp} holds. 
     Let $\tau'>0$ be such that the event
    \begin{equation*}
        \Omega = \left\{
                \sup_{\vv\in H: \|\vv\| {\le} 1} \frac 1 n  \vxi^T\design\vv
                \le \tau'
        \right\}
    \end{equation*}
    satisfies $\mathbb P(\Omega) \ge 1/2$. Let $\lambda\ge10\tau'$ and $\delta\in (0,1)$. Then, the estimator $\hbeta$ defined in \eqref{eq:def-hbeta_1} satisfies, with probability at least $1-\delta$,
    \begin{equation}
        \|
            \design\hbeta
            -
            \vf
        \|_n^2
        \le
        \inf_{\vbeta \in {\mathbb A} }
               \Big[
             \|
                \design\vbeta
                -
                \vf
            \|_n^2
            +\frac{16}{25}\lambda^2 \mu_4^2 (\vbeta)
        \Big] + \frac{16\sigma^2(\Phi^{-1}(1-\delta))^2}{n}
        \label{eq:soi-squared-lasso}
    \end{equation}
    where, in particular, $(\Phi^{-1}(1-\delta))^2\le 2\log(1/\delta)$.
    If, in addition,
    $\vf =\design\vbeta^* $ for some $\vbeta^* \in \mathbb A$, then with probability at least $1-\delta$,
    \begin{equation}
        \|
            \hbeta
            -
            \vbeta^*
        \|
        \le
              4\lambda \mu_4^2 (\vbeta^*)
         + \frac{20\sigma^2 (\Phi^{-1}(1-\delta))^2}{n \lambda}.
        \label{eq:soi-squared-lasso-2}
\end{equation}
\end{theorem}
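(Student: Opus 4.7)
My plan is to begin from Proposition~\ref{prop2} applied with $F(\cdot)=\lambda\|\cdot\|$ and an arbitrary $\vbeta\in\mathbb A$, which gives the deterministic inequality
\[
\|\design\hbeta-\vf\|_n^2 - \|\design\vbeta-\vf\|_n^2 + \|\design(\hbeta-\vbeta)\|_n^2 \le \tfrac{2}{n}\vxi^T\design(\hbeta-\vbeta) + \lambda(\|\vbeta\|-\|\hbeta\|),
\]
and to control each right-hand term by a dedicated tool. For the penalty difference, Assumption~\ref{assum:decomp} applied with $A=\vbeta$ and $B=\hbeta$ (together with $\mathcal P_\vbeta\vbeta=\vbeta$) yields $\lambda(\|\vbeta\|-\|\hbeta\|)\le \lambda N_1 - \lambda N_2$, where $N_1\triangleq\|\mathcal P_\vbeta(\hbeta-\vbeta)\|$ and $N_2\triangleq\|\mathcal P_\vbeta^\perp(\hbeta-\vbeta)\|$. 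For the stochastic term I would invoke Proposition~\ref{prop:deviation-abstract-norm-N} with $h=\|\cdot\|$ and $\tau=\tau'$, which on an event of probability at least $1-\delta$ furnishes
\[
\tfrac{2}{n}\vxi^T\design(\hbeta-\vbeta)\le 2(\tau'+1)\max\bigl(\|\hbeta-\vbeta\|,\,\sigma\Phi^{-1}(1-\delta)\|\design(\hbeta-\vbeta)\|_n/\sqrt n\bigr).
\]

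The argument then splits according to which argument of the maximum is dominant. In the regime where $\|\hbeta-\vbeta\|$ dominates, I apply the triangle inequality $\|\hbeta-\vbeta\|\le N_1+N_2$ to recast the right-hand side as a linear combination $(\lambda+2(\tau'+1))N_1 - (\lambda-2(\tau'+1))N_2$; since $\lambda\ge 10\tau'$ makes the coefficient of $N_2$ sufficiently negative, a dichotomy forces either the left-hand side of the starting inequality to be non-positive (in which case the oracle statement is trivial) or $N_2\le 4N_1$, placing $\hbeta-\vbeta$ in the cone $\mathbb C_{\vbeta,4}$, so that the compatibility factor gives $N_1\le\mu_4(\vbeta)\|\design(\hbeta-\vbeta)\|_n$. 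A Young-type inequality then trades the resulting multiple of $\|\design(\hbeta-\vbeta)\|_n$ against the $-\|\design(\hbeta-\vbeta)\|_n^2$ term already on the left, yielding the $\lambda^2\mu_4^2(\vbeta)$ contribution. In the complementary regime, Young's inequality directly absorbs $\tfrac{2}{n}\vxi^T\design(\hbeta-\vbeta)$ into a noise square of order $\sigma^2(\Phi^{-1}(1-\delta))^2/n$ plus $\|\design(\hbeta-\vbeta)\|_n^2$, which again cancels; the residual $\lambda(N_1-N_2)$ is either non-positive (if $N_1\le N_2$) or is handled via the compatibility factor in the cone $\mathbb C_{\vbeta,1}\subset\mathbb C_{\vbeta,4}$. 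Combining the two regimes and taking the infimum over $\vbeta\in\mathbb A$ produces \eqref{eq:soi-squared-lasso}.

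For the estimation bound \eqref{eq:soi-squared-lasso-2} I would specialize the chain to $\vf=\design\vbeta^*$ with $\vbeta^*\in\mathbb A$, so $\|\design\vbeta^*-\vf\|_n=0$. In the non-trivial case $\hbeta-\vbeta^*\in\mathbb C_{\vbeta^*,4}$ the triangle inequality gives $\|\hbeta-\vbeta^*\|\le N_1+N_2\le 5N_1\le 5\mu_4(\vbeta^*)\|\design(\hbeta-\vbeta^*)\|_n$; substituting the bound $\|\design(\hbeta-\vbeta^*)\|_n\le \tfrac{4}{5}\lambda\mu_4(\vbeta^*)+4\sigma\Phi^{-1}(1-\delta)/\sqrt n$ derived from \eqref{eq:soi-squared-lasso} via $\sqrt{a+b}\le\sqrt a+\sqrt b$, and then applying a final Young-type inequality with parameter $\lambda$ to the mixed term $\mu_4(\vbeta^*)\cdot\sigma\Phi^{-1}(1-\delta)/\sqrt n$, delivers the advertised $4\lambda\mu_4^2(\vbeta^*)+20\sigma^2(\Phi^{-1}(1-\delta))^2/(n\lambda)$. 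The principal obstacle I anticipate is the bookkeeping of numerical constants along this chain: the $\tau'+1$ factor in Proposition~\ref{prop:deviation-abstract-norm-N} must be reconciled with the hypothesis $\lambda\ge 10\tau'$, and the Young-inequality parameters at several junctures must be tuned precisely so that the constants $16/25$, $16$, $4$, and $20$ advertised in the theorem come out correctly rather than slightly looser variants.
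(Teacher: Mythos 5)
There is a genuine gap at the very first step: you invoke Proposition~\ref{prop:deviation-abstract-norm-N} with $h=\|\cdot\|$ and $\tau=\tau'$, obtaining the bound $2(\tau'+1)\max\bigl(\|\hbeta-\vbeta\|,\,\nu\|\design(\hbeta-\vbeta)\|_n\bigr)$ with $\nu=\sigma\Phi^{-1}(1-\delta)/\sqrt n$. The additive ``$+1$'' in the factor $(\tau'+1)$ is an artifact of the normalization inside that proposition and is \emph{not} commensurate with $\lambda$. Your subsequent claim that $\lambda\ge 10\tau'$ makes the coefficient of $N_2$ in $(\lambda+2(\tau'+1))N_1-(\lambda-2(\tau'+1))N_2$ sufficiently negative is false in general: $\lambda-2(\tau'+1)\ge 8\tau'-2$, which is negative whenever $\tau'<1/4$ (e.g.\ $\sigma$ small or $n$ large with $\lambda=10\tau'$), so the cone dichotomy never triggers and the argument collapses. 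The same defect poisons the other regime: $2(\tau'+1)\nu x-x^2\le(\tau'+1)^2\nu^2$ matches the advertised $16\nu^2$ only if $\tau'\le 3$, again not guaranteed.

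The missing idea — and the crux of the paper's proof — is to exploit the scale non-invariance of Proposition~\ref{prop:deviation-abstract-norm-N} by first rewriting $\Omega$ as $\bigl\{\sup_{\lambda\|\vv\|/5\le 1}\tfrac1n\vxi^T\design\vv\le 5\tau'/\lambda\bigr\}$ and applying the proposition with $h(\vv)=\lambda\|\vv\|/5$ and $\tau=5\tau'/\lambda$. Then $\tau+1\le 3/2$ (since $\lambda\ge10\tau'$) and the max contains $\lambda\|\vu\|/5$, so every term scales correctly with $\lambda$ and the coefficients $8/5$ and $-2/5$ in front of $\|\cP_\vbeta\vu\|$ and $\|\cP_\vbeta^\perp\vu\|$ come out uniformly in $\lambda$. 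With that rescaling in place, the rest of your outline (triangle inequality plus Assumption~\ref{assum:decomp}, the cone $\mathbb C_{\vbeta,4}$, the compatibility factor, and the quadratic maximization $ax-x^2\le a^2/4$) matches the paper's argument. A secondary remark: for the estimation bound the paper does not chain through the prediction inequality via $\sqrt{a+b}\le\sqrt a+\sqrt b$ followed by another Young step — that route inflates the constant in front of $\lambda\mu_4^2(\vbeta^*)$ beyond $4$; instead it treats the two cases separately, getting $\|\vu\|\le 20\nu^2/\lambda$ in the first and $\|\vu\|\le 5\|\cP_{\vbeta^*}\vu\|\le 4\lambda\mu_4^2(\vbeta^*)$ in the second directly from $\|\design\vu\|_n^2\le \tfrac{4}{5}\lambda\|\cP_{\vbeta^*}\vu\|$.
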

\begin{proof}
 Note that 
    \begin{equation*}
        \Omega = \left\{
                \sup_{\vv\in H: \lambda \|\vv\|/5 {\le} 1} \frac 1 n  \vxi^T\design\vv
                \le 5\tau'/\lambda
        \right\}.
    \end{equation*}
    By Proposition~\ref{prop:deviation-abstract-norm-N} with $h(\vv)=\lambda \|\vv\|/5$ and $\tau=5\tau'/\lambda$ we obtain
that, on an event $\Omega'$ of probability at least $1-\delta$,
    \begin{equation*}
            \forall \vu\in H:
            \quad
        \frac 1 n \vxi^T\design\vu
        \le (5\tau'/\lambda+1) \max\left(\lambda \|\vu\|/5, \nu \norm{\design\vu}_n \right)
            \end{equation*}
where      
$$
\nu =  \frac{\sigma\Phi^{-1}(1-\delta)}{\sqrt{n}} .
$$
In the rest of the proof, we will place ourselves on the event $\Omega'$. Using Proposition~\ref{prop2} and the last display we find that on $\Omega'$, for all $\vbeta\in \mathbb B$,
\begin{align}\nonumber
\|\design\hbeta - \vf\|^2_n-\norm{\design\vbeta - \vf}^2_n
&\le 2(5\tau'/\lambda+1) \max\left(\lambda \|\vu\|/5,  \nu  \norm{\design\vu}_n \right)  \\
& \nonumber \qquad \quad + \lambda\|\vbeta\|- \lambda\|\hbeta\|
- \norm{\design \vu}^2_n     \\  
& \le 3 \max\left(\lambda \|\vu\|/5,  \nu \norm{\design\vu}_n \right) 
\label{eq:th1:1} \\
& \nonumber \qquad \quad + \lambda\|\vbeta\|
        - \lambda\|\hbeta\|
        - \norm{\design \vu}^2_n
    \end{align}
where $\vu=\hbeta - \vbeta$. We now consider separately three cases.

{\it  Case 1: Matrix $\vbeta\in \mathbb A$ is such that $\lambda \|\vu\|/5 \le \nu \norm{\design\vu}_n$.} Then,
\begin{equation}
        \|\design\hbeta - \vf\|^2_n
        -
        \norm{\design\vbeta - \vf}^2_n
               \le
  8\nu\norm{\design\vu}_n    
               - \norm{\design \vu}^2_n\le 16\nu^2.
        \label{eq:th1:2}
    \end{equation}
Thus, for such $\vbeta$ inequality \eqref{eq:soi-squared-lasso} is satisfied.

The next two cases correspond to $\vbeta\in \mathbb A$ such that $\lambda \|\vu\|/5 > \nu \norm{\design\vu}_n$. If this inequality holds, then \eqref{eq:th1:1} implies
\begin{equation}
        \|\design\hbeta - \vf\|^2_n
        -
        \norm{\design\vbeta - \vf}^2_n
              \le
   \lambda (3\|\hbeta-\vbeta\|/5+ \|\vbeta\|
        - \|\hbeta\| )
        - \norm{\design \vu}^2_n.
        \label{eq:th1:3}
    \end{equation}
Assumption~\ref{assum:decomp} with $A=\vbeta$ and $B=\hbeta$ grants that
$$\|\vbeta\| - \|\hbeta\| \le \|\cP_\vbeta(\vbeta - \hbeta)\| - \|\cP_\vbeta^\perp \hbeta\|$$
while, by the triangle inequality,
$$\|\hbeta-\vbeta\| \le \|\cP_\vbeta(\vbeta - \hbeta)\| + \|\cP_\vbeta^\perp(\vbeta-\hbeta) \|=\|\cP_\vbeta(\vbeta - \hbeta)\| + \|\cP_\vbeta^\perp\hbeta \|.$$
Combining the last two inequalities we obtain
\begin{align*}
3\|\hbeta-\vbeta\|/5+ \|\vbeta\|
        - \|\hbeta\|
\le 8\|\cP_{\vbeta}(\hbeta-\vbeta)\|/5 - 2\|\cP_{\vbeta}^{\perp}\hbeta\|/5.
    \end{align*}
This inequality and \eqref{eq:th1:3} imply
\begin{equation}
        \|\design\hbeta - \vf\|^2_n
        -
        \norm{\design\vbeta - \vf}^2_n
              \le
   (2\lambda/5)\left(4\|\cP_{\vbeta}\vu\| - \|\cP_{\vbeta}^{\perp}\vu\|\right)
        - \norm{\design \vu}^2_n.
        \label{eq:th1:5}
    \end{equation}

{\it  Case 2: Matrix $\vbeta\in \mathbb A$ is such that $\lambda \|\vu\|/5 > \nu \norm{\design\vu}_n$ and 
$4\|\cP_{\vbeta}\vu\| < \|\cP_{\vbeta}^{\perp}\vu\|$.} Then, in view of \eqref{eq:th1:5}, inequality \eqref{eq:soi-squared-lasso} holds trivially.

{\it  Case 3: Matrix $\vbeta\in \mathbb A$ is such that $\lambda \|\vu\|/5 > \nu \norm{\design\vu}_n$ and 
$4\|\cP_{\vbeta}\vu\| \ge \|\cP_{\vbeta}^{\perp}\vu\|$.} Then $\vu$ belongs to the cone ${\mathbb C}_{\vbeta,4}$, so that 
$\|\cP_{\vbeta}\vu\| \le \mu_4(\vbeta)\norm{\design \vu}_n$. This and \eqref{eq:th1:5} yield
\begin{equation*}
        \|\design\hbeta - \vf\|^2_n
        -
        \norm{\design\vbeta - \vf}^2_n
              \le
   \frac{8\lambda \mu_4(\vbeta)}{5}\norm{\design \vu}_n
        - \norm{\design \vu}^2_n \le \frac{16}{25}\lambda^2\mu_4^2(\vbeta),
        \label{eq:th1:6}
    \end{equation*}
and hence inequality \eqref{eq:soi-squared-lasso}.

Consider now the well-specified case: $\vf =\design\vbeta^* $ for some $\vbeta^* \in \mathbb A$. Set $\vu=\hbeta-\vbeta^*$. Again, we proceed in cases.

{\it  Case 1: Matrix $\vbeta^*\in \mathbb A$ is such that $\lambda \|\vu\|/5 \le \nu \norm{\design\vu}_n$.} Then, inequality \eqref{eq:th1:2} with $\vbeta=\vbeta^*$ implies $\norm{\design\vu}_n\le 4\nu$, so that $ \|\vu\|\le 20\nu^2/\lambda$. The bound \eqref{eq:soi-squared-lasso-2} follows.

{\it  Case 2: Matrix $\vbeta^*\in \mathbb A$ is such that $\lambda \|\vu\|/5 > \nu \norm{\design\vu}_n$.} Then, from \eqref{eq:th1:5} with  $\vbeta=\vbeta^*$ we obtain that $4\|\cP_{\vbeta^*}\vu\| \ge \|\cP_{\vbeta^*}^{\perp}\vu\|$, and consequently,   $\|\cP_{\vbeta^*}\vu\| \le \mu_4(\vbeta^*)\norm{\design \vu}_n$. On the other hand, \eqref{eq:th1:5} also implies that
\begin{align*}
        \norm{\design\vu}^2_n
         \le
   4\lambda\|\cP_{\vbeta^*}\vu\|/5.       
    \end{align*}
In conclusion, $\|\cP_{\vbeta^*}\vu\| \le  4\lambda\mu_4^2(\vbeta^*)/5$. Finally, $\|\vu\| =  \|\cP_{\vbeta^*}\vu\| +\|\cP_{\vbeta^*}^{\perp}\vu\| \le  5\|\cP_{\vbeta^*}\vu\| \le 4\lambda\mu_4^2(\vbeta^*)$. The bound \eqref{eq:soi-squared-lasso-2} follows.

\end{proof}

By integration over $\delta$, we can readily derive from Theorem~\ref{th1}  bounds for any moments of $\|
            \design\hbeta
            -
            \vf
        \|_n$ and $\|
            \hbeta
            -
            \vbeta^*
        \|$. In particular, the following corollary is immediate.
\begin{corollary}
    \label{cor1}
Under the assumptions of Theorem~\ref{th1}, the estimator $\hbeta$ defined in \eqref{eq:def-hbeta_1} satisfies    
\begin{equation}\label{eq:soi-squared-lasso-3}
\E \|\design\hbeta-\vf\|_n^2\le\min_{\vbeta \in {\mathbb A} }\Big[\|\design\vbeta-\vf\|_n^2+\frac{16}{25}\lambda^2 \mu_4^2 (\vbeta)\Big] + \frac{16\sigma^2}{n}.
\end{equation} 
    If, in addition, $\vf =\design\vbeta^* $ for some $\vbeta^* \in \mathbb A$, then 
\begin{equation}\label{eq:soi-squared-lasso-4}
\E \|\hbeta-\vbeta^*\|\le8\lambda \mu_4^2 (\vbeta^*)+ \frac{20\sigma }{\lambda n}.
\end{equation}
\end{corollary}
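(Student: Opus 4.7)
The plan is to derive each of the two expected-value bounds by integrating the corresponding high-probability estimate from Theorem~\ref{th1} over the confidence parameter $\delta\in(0,1)$. The single abstract ingredient is a standard quantile fact: for any nonnegative random variable $Y$ and any function $g:(0,1)\to[0,+\infty)$ satisfying $\mathbb P(Y > g(\delta))\le\delta$ for all $\delta$, the value $g(\delta)$ dominates the $(1-\delta)$-quantile of $Y$, so that
\[
\E[Y] \;=\; \int_0^1 Q_Y(1-\delta)\,d\delta \;\le\; \int_0^1 g(\delta)\,d\delta,
\]
where $Q_Y$ denotes the quantile function of $Y$. Since the deterministic part of each bound in Theorem~\ref{th1} does not depend on $\delta$, the whole calculation will reduce to a single Gaussian integral.

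For \eqref{eq:soi-squared-lasso-3}, I would fix the deterministic constant $C_0 = \inf_{\vbeta\in\mathbb A}\bigl[\|\design\vbeta-\vf\|_n^2 + (16/25)\lambda^2\mu_4^2(\vbeta)\bigr]$ and apply the principle above to the nonnegative random variable $Y=(\|\design\hbeta-\vf\|_n^2-C_0)_+$ with $g(\delta)=16\sigma^2(\Phi^{-1}(1-\delta))^2/n$, which is exactly what Theorem~\ref{th1} delivers. This gives $\E\|\design\hbeta-\vf\|_n^2 \le C_0 + \E[Y] \le C_0 + (16\sigma^2/n)\int_0^1(\Phi^{-1}(1-\delta))^2\,d\delta$. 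Inequality~\eqref{eq:soi-squared-lasso-4} is then handled identically, taking $Y=(\|\hbeta-\vbeta^*\|-4\lambda\mu_4^2(\vbeta^*))_+$ and $g(\delta)=20\sigma^2(\Phi^{-1}(1-\delta))^2/(n\lambda)$ in the well-specified case.

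The only non-routine computation is the identity $\int_0^1(\Phi^{-1}(1-\delta))^2\,d\delta = 1$, which I would justify by the change of variable $u=1-\delta$ and by recognizing the resulting integral as $\E[Z^2]$ for $Z\sim\mathcal N(0,1)$ (using that $\Phi^{-1}(U)$ is standard Gaussian when $U$ is uniform on $(0,1)$). Once this identity is in hand, the constants $16\sigma^2/n$ and $20\sigma^2/(n\lambda)$ drop out immediately. There is no real obstacle here, which is consistent with the statement being packaged as a corollary: the hard analytic work lives entirely inside Theorem~\ref{th1}, and the only task for the corollary is to package the deviation bound as an integrated moment bound.
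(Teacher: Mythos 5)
Your proposal is correct and matches the paper's approach exactly: the paper offers no detailed proof, stating only that the corollary is ``immediate by integration over $\delta$,'' which is precisely the quantile-integration argument you spell out, including the key identity $\int_0^1(\Phi^{-1}(1-\delta))^2\,d\delta=1$ (the second moment of a standard Gaussian). Note only that your computation in the well-specified case yields $\E\|\hbeta-\vbeta^*\|\le 4\lambda\mu_4^2(\vbeta^*)+20\sigma^2/(n\lambda)$, which differs from the displayed \eqref{eq:soi-squared-lasso-4} in the constant $8$ versus $4$ and in $\sigma$ versus $\sigma^2$; the latter appears to be a typo in the paper, and your (dimensionally consistent) bound is the one that actually follows from Theorem~\ref{th1}.
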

Note that the regularization parameter $\lambda$ does not depend on the parameter $\delta$ that defines the confidence level. This is a key to get results in expectation as in Corollary~\ref{cor1}.

\section{Control of the compatibility factor}
\label{sec:control_re_constant}
As follows from Theorem~\ref{th1} and Corollary~\ref{cor1}, the performance of penalized LS estimators is driven by the  compatibility factor $\mu_{c_0}(A)$ defined in \eqref{eq:re_constant}. The aim of this section is to provide a control of this quantity uniformly over all $A\in\bA$ with high probability when $X_1, \ldots, X_n$ are $n$ independent and identically distributed (i.i.d.) realizations of an $H$-valued random variable $X$. We will consider $X$ satisfying the following assumption, cf. \cite{MR3431642,MR3367000}.

\begin{assumption}[Small ball assumption]\label{ass:small_ball}
There exist constants $\beta_0>0$ and $\kappa_0\in(0,1)$ such that for all $B\in\bB$, 
\begin{equation*}
\bP\left[|\inr{X, B}|\geq \beta_0 \|B\|_H\right]\geq \kappa_0.
\end{equation*}
\end{assumption}

This assumption is rather mild. We refer the reader to \cite{MR3431642,MR3364699,MR3367000} for some examples.
A simple sufficient condition for the small ball assumption is given in the next lemma.
\begin{lemma}
    \label{lemma:moments2-4}
    Assume that $X$ is isotropic in the sense that 
    \begin{equation}
    \label{isotropic}
        \forall B \in H,
        \qquad \E\inr{X,B}^2 = \|B\|^2_H.
    \end{equation}
    Furthermore, assume that there exists a constant $L>0$ such that for any $B\in H$, 
    \begin{equation}
        \E\left[\inr{X,B}^4\right]^{1/4} \le 2 L \E\left[\inr{X,B}^2\right]^{1/2}. 
        \label{assum:moments2-4}
    \end{equation}
    Then $X$ satisfies the small ball assumption with parameters
    \begin{equation}
    \label{sb-params}
        \beta_0 = 1/\sqrt 2
        \qquad
        \text{ and }
        \qquad
        \kappa_0=1/(64L^4).
    \end{equation}
\end{lemma}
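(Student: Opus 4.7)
The plan is to apply a Paley--Zygmund-type argument to the nonnegative random variable $Z = \inr{X,B}^2$, using \eqref{isotropic} to compute its first moment and the $L^4$--$L^2$ equivalence \eqref{assum:moments2-4} to control its second moment.

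First, I would fix an arbitrary $B\in H$ (in fact, $B\in\bB$ suffices) and set $Z = \inr{X,B}^2 \ge 0$. By the isotropy assumption \eqref{isotropic}, $\E Z = \|B\|_H^2$. By raising \eqref{assum:moments2-4} to the fourth power,
\begin{equation*}
    \E Z^2 = \E\inr{X,B}^4 \le (2L)^4 \bigl(\E\inr{X,B}^2\bigr)^2 = 16L^4 (\E Z)^2.
\end{equation*}

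Next, I would invoke the standard Paley--Zygmund inequality: for any $\theta\in(0,1)$,
\begin{equation*}
    \bP\bigl(Z > \theta \E Z\bigr) \ge (1-\theta)^2 \frac{(\E Z)^2}{\E Z^2} \ge \frac{(1-\theta)^2}{16L^4}.
\end{equation*}
Specializing to $\theta = 1/2$ gives $\bP(Z > \tfrac{1}{2}\|B\|_H^2) \ge \tfrac{1}{4}\cdot\tfrac{1}{16L^4} = \tfrac{1}{64L^4}$. Unpacking the definition of $Z$, this is exactly
\begin{equation*}
    \bP\Bigl(|\inr{X,B}| \ge \tfrac{1}{\sqrt 2}\|B\|_H\Bigr) \ge \frac{1}{64L^4},
\end{equation*}
which is the small ball property with the claimed parameters \eqref{sb-params}.

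There is no real obstacle; the only thing worth being careful about is that Paley--Zygmund requires $\E Z<\infty$ and $\E Z^2<\infty$, both of which are ensured by assumption \eqref{assum:moments2-4} applied to the given $B$ (in case $B = 0$, the statement is trivial since $\|B\|_H=0$). The whole argument is one short computation, and no structural properties of $H$ beyond the Hilbert inner product are needed.
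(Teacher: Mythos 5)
Your proof is correct and is essentially identical to the paper's: both apply the Paley--Zygmund inequality to $\inr{X,B}^2$, use isotropy to identify its mean and the $L^4$--$L^2$ bound \eqref{assum:moments2-4} to control its second moment, and specialize to $\theta=\beta_0^2=1/2$ to get the constant $1/(64L^4)$. No differences worth noting.
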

\begin{proof}
It follows from the Paley-Zygmund inequality (cf., for instance, Proposition~3.3.1 in \cite{MR1666908}) ) that   
\begin{align*}
    \bP\left(|\inr{X,  B}|\geq \beta_0 \|B\|_H\right) 
    &= \bP\left(|\inr{X,  B}|^2\geq \beta_0^2 \E\left[\inr{X,B}^2\right]\right), 
    \\
    &\geq (1-\beta_0^2)^2 \E\left[\inr{X,B}^2\right]^{2}\E\left[\inr{X,B}^4\right]^{-1}\geq (1-\beta_0^2)^2\left(\frac{1}{2L}\right)^4.
\end{align*}
Hence, $X$ satisfies the small ball assumption with parameters $\beta_0,\kappa_0$ defined in \eqref{sb-params}.
\end{proof}

Lemma \eqref{lemma:moments2-4} shows that the small ball assumption is satisfied under weak moment conditions.
Indeed, the existence of moments $\E\inr{X,B}^p$ for $p>4$ is not required.

The small ball assumption is helpful in situations where one needs  to bound from below an empirical process with nonnegative terms. Note that $\norm{\bX B}_n^2 = (1/n)\sum_{i=1}^n \inr{X_i, B}^2$ is an empirical process with nonnegative terms considered as a function of $B\in \bC_{A, c_0}$. If we obtain a uniform lower bound on it, an upper bound on the compatibility factor $\mu_{c_0}(A)$ follows. The next theorem, cf. \cite{MR3431642},  provides such a lower bound on $\norm{\bX B}_n^2$ based on the small ball argument. For the sake of completeness, we recall here its proof. 

\begin{theorem}[cf. Theorem~2.1 in \cite{MR3431642}]
\label{theo:small_ball_re}
Let $X$ be an $H$-valued random variable satisfying Assumption \ref{ass:small_ball} with parameters $\beta_0>0$ and $\kappa_0\in(0,1)$. Let $X_1, \ldots, X_n$ be $n$ i.i.d. realizations of  $X$. Assume that 
\begin{equation}\label{eq:minimal_n_re}
   \E \sup_{B\in S_2 \cap \left(\cup_{A\in\bA} \bC_{A, c_0}\right)}\left|\frac{1}{n}\sum_{i=1}^n\eps_i \inr{X_i, B}\right|\leq \frac{\kappa_0 \beta_0}{16}
\end{equation}
where $S_2$ is the unit sphere in $H$ and $\eps_1, \ldots, \eps_n$ are i.i.d. random variables uniformly distributed on $\{-1, 1\}$ and independent of $X_1, \ldots, X_n$. Then, with probability greater than $1-\exp\left(-n\kappa_0^2/32\right)$, for all $B\in \cup_{A\in\bA} \bC_{A, c_0}$ we have
\begin{equation*}
    \|\design B\|_n \geq \|B\|_H\sqrt{\frac{\beta_0^2\kappa_0}{8}}.
\end{equation*}
\end{theorem}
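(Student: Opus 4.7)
The strategy is the small-ball method of Mendelson. Since $\|\design B\|_n$ and $\|B\|_H$ are positively homogeneous in $B$ and $\bigcup_{A\in\bA}\bC_{A,c_0}$ is a cone, it suffices to prove the desired bound uniformly over the index set $T := S_2\cap\bigcup_{A\in\bA}\bC_{A,c_0}$. For any such $B$, the pointwise inequality $x^2 \ge (\beta_0/2)^2\,\mathbf{1}\{|x|\ge \beta_0/2\}$ applied with $x=\langle X_i,B\rangle$ yields
\[
\|\design B\|_n^2 \;\ge\; \frac{\beta_0^2}{4}\cdot \frac{1}{n}\sum_{i=1}^n \mathbf{1}\!\left\{|\langle X_i,B\rangle|\ge \beta_0/2\right\},
\]
reducing the task to a uniform lower bound on this empirical fraction of ``large'' inner products.

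To access Rademacher-type complexities, I would replace the indicator by a Lipschitz surrogate. Let $\psi:\mathbb{R}\to[0,1]$ be the continuous piecewise-linear function that vanishes on $[-\beta_0/2,\beta_0/2]$, equals $1$ outside $[-\beta_0,\beta_0]$, and is linear in between. Then $\psi(0)=0$, $\psi$ is $(2/\beta_0)$-Lipschitz, and the sandwich
\[
\mathbf{1}\{|x|\ge \beta_0\}\ \le\ \psi(x)\ \le\ \mathbf{1}\{|x|\ge \beta_0/2\}
\]
holds. Taking expectations in the right inequality and invoking Assumption~\ref{ass:small_ball} with $\|B\|_H=1$ gives $\mathbb{E}\psi(\langle X,B\rangle)\ge \kappa_0$ uniformly in $B\in T$, while the left inequality lets me replace the empirical fraction above by $(1/n)\sum_i\psi(\langle X_i,B\rangle)$ without loss.

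The core probabilistic step is a uniform concentration of this Lipschitz empirical process. Writing
\[
Z := \sup_{B\in T}\left|\frac{1}{n}\sum_{i=1}^n\psi(\langle X_i,B\rangle)-\mathbb{E}\psi(\langle X,B\rangle)\right|,
\]
I would apply standard symmetrization followed by the Ledoux--Talagrand contraction principle; since $\psi(0)=0$ and $\psi$ is $(2/\beta_0)$-Lipschitz, this bounds $\mathbb{E} Z$ by an absolute constant times $\beta_0^{-1}\,\mathbb{E}\sup_{B\in T}\bigl|n^{-1}\sum_i\varepsilon_i\langle X_i,B\rangle\bigr|$, and the assumed Rademacher bound~\eqref{eq:minimal_n_re} then produces $\mathbb{E} Z\le \kappa_0/2$. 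To upgrade to a high-probability statement I apply McDiarmid's bounded differences inequality: because $\psi\in[0,1]$, altering a single $X_i$ changes $Z$ by at most $2/n$, so $\mathbb{P}(Z\ge \mathbb{E} Z+u)\le \exp(-nu^2/2)$; choosing $u = \kappa_0/4$ yields, with probability at least $1-\exp(-n\kappa_0^2/32)$, the uniform bound $(1/n)\sum_i\psi(\langle X_i,B\rangle)\ge \kappa_0/4$ for every $B\in T$. Plugging back into the Markov preamble gives the announced lower bound (up to an innocuous numerical constant inside the square root).

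The main obstacle is the symmetrization--contraction chain: one must reduce to the unit sphere so that $\psi$ is a single function independent of $B$ (otherwise its Lipschitz constant would blow up as $\|B\|_H\to 0$), arrange that $\psi(0)=0$ so that the cleanest form of contraction applies, and track the $1/\beta_0$ factor carefully so that it cancels the $\beta_0$ present in the hypothesis $\kappa_0\beta_0/16$, producing a dimensionless control on $\mathbb{E} Z$. Everything else --- the Markov preamble and the McDiarmid postlude --- is routine.
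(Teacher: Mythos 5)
Your proposal is correct and follows essentially the same route as the paper's proof: the Markov/small-ball preamble, the piecewise-linear surrogate sandwiched between the two indicators, and symmetrization, contraction and the bounded differences inequality to control the centered empirical process. The only difference is slightly looser constant tracking (working with a two-sided supremum costs an extra factor of $2$ in the symmetrization--contraction chain), which yields $\beta_0^2\kappa_0/16$ rather than the paper's $\beta_0^2\kappa_0/8$ inside the square root, as you acknowledge.
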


\begin{proof}By homogeneity, it is enough to prove the result for all $B\in \cB$ where $\cB \triangleq S_2 \cap \left(\cup_{A\in\bA} \bC_{A, c_0}\right)$. 
Denote by $P_n$ the empirical measure associated to $X_1, \ldots, X_n$. Let $B\in S_2$. We have
\begin{align}\label{eq:markov_empiric}
\nonumber &\|\design B\|_n^2 = \frac{1}{n}\sum_{i=1}^n \inr{X_i, B}^2 \triangleq P_n \inr{\cdot, B}^2\geq \frac{\beta_0^2}{4} P_n\left[|\inr{\cdot, B}|\geq (\beta_0/2)\right]\\
\nonumber & = \frac{\beta_0^2 }{4}\left\{\bP\left[|\inr{X, B}|\geq \beta_0\right] +  P_n\left[|\inr{\cdot, B}|\geq (\beta_0/2)\right] - \bP\left[|\inr{X, B}|\geq \beta_0\right]\right\}\\
&\geq \frac{\beta_0^2}{4}\left\{\kappa_0 + (P_n-P)\phi\left(|\inr{\cdot, B}|\right)\right\}
\end{align}where in the last inequality we used the small ball assumption and the fact that $P_n\left[|\inr{\cdot, B}|\geq (\beta_0/2)\right] \geq P_n\phi\left(|\inr{\cdot, B}|\right)$ and $\bP\left[|\inr{X, B}|\geq \beta_0\right]\leq P\phi\left(|\inr{\cdot, B}|\right)$ where $\phi$ is defined by
\begin{equation*}
 \phi(t) = \left\{
\begin{array}{cc}
1 & \mbox{ if } t\geq \beta_0\\
2t/\beta_0-1 & \mbox{ if } \beta_0/2\leq t \leq \beta_0\\
0  & \mbox{ otherwise.}
\end{array}
 \right.
 \end{equation*} 
Set now 
\begin{equation*}
f(X_1, \ldots, X_n) = \sup_{B\in \cB} (P-P_n)\phi\left(|\inr{\cdot, B}|\right).
\end{equation*}It follows from the bounded difference inequality (cf., for instance, Theorem~6.2 in \cite{MR3185193}) 
that for all $x>0$, with probability greater than $1-\exp(-x)$, 
\begin{equation*}\label{}
f(X_1, \ldots, X_n)\leq \E f(X_1, \ldots, X_n) + \sqrt{\frac{2x}{n}}.
\end{equation*}
This and
 the Gin{\'e}-Zinn symmetrization inequality (cf., for instance, Chapter~2.3 in \cite{MR1385671}) yields that for all $x>0$, with probability greater than $1-\exp(-x)$, 
\begin{equation}\label{eq:bounded_diff_ineq}
f(X_1, \ldots, X_n)\leq 2\E \sup_{B\in \cB} \frac{1}{n}\sum_{i=1}^n \eps_i \phi\left(|\inr{ X_i, B}|\right)+ \sqrt{\frac{2x}{n}}.
\end{equation}Note that $\phi$ is a Lipschitz function with Lipschitz constant $2/\beta_0$ and $\phi(0)=0$. Thus, it follows from the contraction inequality (cf. equation~(4.20) in \cite{MR2814399}) that
\begin{equation*}
\E \sup_{B\in \cB} \frac{1}{n}\sum_{i=1}^n \eps_i \phi\left(|\inr{X_i, B}|\right)\leq \frac{2}{\beta_0}\E \sup_{B\in \cB} \frac{1}{n}\sum_{i=1}^n \eps_i \inr{X_i, B}\leq \frac{\kappa_0}{8}
\end{equation*}
where the last inequality is due to  \eqref{eq:minimal_n_re}.
Combining this bound with \eqref{eq:bounded_diff_ineq} and choosing $x=n \kappa_0^2/32$ we obtain that $f(X_1, \ldots, X_n)\leq \kappa_0/2$ with probability greater than $1-\exp\left(-n \kappa_0^2/32\right)$. Therefore, with the same probability, $(P_n-P)\phi\left(|\inr{\cdot, B}|\right)\geq -\kappa_0/2$ for all $B\in\cB$. This and \eqref{eq:markov_empiric} prove the theorem.
\end{proof}

It follows from Theorem~\ref{theo:small_ball_re} that if $X$ satisfies the small ball assumption and $n$ is large enough so that \eqref{eq:minimal_n_re} holds then, with probability greater than $1-\exp(-n\kappa_0^2/32)$, for all $A\in\bA$, 
\begin{equation}\label{eq:condi_mu}
    \mu_{c_0}(A)\leq \left(\frac{8}{\beta_0^2\kappa_0}\right)^{1/2}\sup_{B\in\bC_{A, c_0}}\frac{\norm{\cP_A B}}{\|B\|_H}. 
\end{equation}
Thus, we have reduced the control of $\mu_{c_0}(A)$ to the bound \eqref{eq:minimal_n_re} on the expectation of the empirical process.  Under certain assumptions, this expectation can be controlled in terms of the \textit{Gaussian mean width} of the set $S_2 \cap \left(\cup_{A\in\bA} \bC_{A, c_0}\right)$ as explained below. Then, we can derive an estimate on
a sufficient number $n$ of observations for \eqref{eq:minimal_n_re} to hold.
The argument can be carried out using the main result from
\cite{shahar_expect}.
To state this result, we first introduce the definition of the Gaussian mean width of a subset of a Hilbert space
and the definition of a $K$-unconditional norm.

Let $\bC$ be a subset of the Hilbert space $H$. We denote by $(G_B)_{B\in\bC}$ the centered gaussian process indexed by $\bC$ having the same covariance structure as $X$, that is $\E G_B=0$ and $\E G_{B_1}G_{B_2}=\E\inr{B_1,X}\inr{X,B_2}$ for all $B,B_1,B_2\in\bC$ (we refer the reader to \cite{lifshits} or to Chapter~12 in \cite{MR1932358} for more details on Gaussian processes in Hilbert spaces). The \textit{Gaussian mean width of $\bC$} is defined as
\begin{equation}\label{eq:gauss_width}
\ell^*(\bC) = \sup\left\{\E\max_{B\in\bC^\prime}G_B: \bC^\prime\subset \bC \mbox{ is finite }\right\}.
\end{equation}This supremum is called the lattice supremum (see Chapter~2.2 in \cite{MR2814399} for more details).

In the following, we consider a finite dimensional Hilbert space $H$ and we denote by $d$ its dimension. The two examples analyzed in Section~\ref{sec:examples} are $H=\R^p$ and $H=\R^{k\times m}$.
In this case, for all $\bC\subset H$ 
 we have $$\ell^*(\bC) = \E\sup_{B\in\bC}\inr{G, B}$$
 where $G$ is a  $H$-valued random variable with i.i.d. $\cN(0, 1)$ components. 
We will also need the following definition, cf. \cite{shahar_expect}.

\begin{definition}\label{def:unconditional}
Let $H$ be a finite dimensional Hilbert space, let $(e_j)_{j=1, \ldots, d}$ be a basis in $H$, and $K>0$. A norm $\norm{\cdot}$ on $H$ is called $K$-unconditional norm with respect to the basis $(e_j)_{j=1, \ldots, d}$ if the following two properties hold.
 \begin{itemize}
     \item
For any $B\in H$ and any permutation $\pi$ of $\{1, \ldots,d\}$,
\begin{equation*}
\Big\|{\sum_{j=1}^d \inr{B, e_j} e_j} \Big\|
\leq K \Big\|{\sum_{j=1}^d\inr{B, e_{\pi(j)}} e_j}\Big\|.
\end{equation*}
\item If $A\in H$ is such that $\inr{A, e_j}^\sharp\leq \inr{B, e_j}^\sharp$ for all $j=1, \ldots,d$, then
\begin{equation*}
\Big\|{\sum_{j=1}^d \inr{A, e_j} e_j} \Big\|
\leq K \Big\|{\sum_{j=1}^d\inr{B, e_j} e_j}\Big\|
\end{equation*}
where $(\inr{B, e_j}^\sharp)_{j}$ is the nonincreasing rearrangement of $(|\inr{B, e_j}|)_{j}$.
\end{itemize}
 \end{definition} 

 The class of $K$-unconditional norms is rather rich. It includes, in particular, the $\ell_p$-norms.  For more details see \cite{shahar_expect}.
 
 A bound on the expectation of the empirical process  in \eqref{eq:minimal_n_re} can be obtained from the following result.  

 \begin{theorem}{\rm \cite[Theorem~1.6]{shahar_expect}}
 \label{theo:shahar} There exists an absolute constant $c_1>0$ such that the following holds.
  Let $H$ be a finite dimensional Hilbert space, let $X$ be a random vector with values in $H$ and let $\bC\subset H$. Denote by $(e_j)_{j=1,\ldots, d}$ a basis in $H$.
 Let $L\ge 1$, and assume that:
 \begin{itemize}
     \item[(i)] The set $\bC$ is such that $\norm{\cdot}_{\bC^\circ}\triangleq \sup_{v\in\bC}\inr{v,\cdot}$ is a $K$-unconditional norm.
     \item[(ii)] The distribution of $X$ is isotropic,
         i.e., satisfies \eqref{isotropic}, and for any $j=1, \ldots, d$, and
         any positive integer $k$ smaller than $c_1 \log d$ we have 
         $$(\E \vert \inr{X, e_j} \vert^k)^{1/k}\leq L \sqrt{k}.$$
 \end{itemize}
      Let $X_1, \ldots, X_n$ be i.i.d. realizations of $X$ and let $\eps_1, \ldots, \eps_n$ be i.i.d. random variables uniformly distributed on $\{-1, 1\}$ and independent of $X_1, \ldots, X_n$. Then
     \begin{equation}
         \label{upper-bound-ell-star}
         \E \sup_{B\in\bC}\left|\frac{1}{\sqrt{n}}\sum_{i=1}^n \eps_i\inr{X_i, B}\right|\leq C(L,K) \ell^*(\bC),
     \end{equation}
     where $C(L,K)$ is a constant that depends only on $K$ and $L$.
 \end{theorem}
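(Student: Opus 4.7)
The plan is to match the conditional Bernoulli process indexed by $\bC$ with the canonical Gaussian process whose supremum defines $\ell^*(\bC)$, and then invoke generic chaining. The difficulty is that we cannot simply compare the Rademacher and Gaussian processes via a black-box subgaussian comparison, because $X$ itself is only isotropic with very weak moment control; the $K$-unconditional hypothesis on the dual norm of $\bC$ is exactly what compensates for this.

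First, I would observe that conditionally on $X_1,\ldots,X_n$, the symmetrized process $B \mapsto n^{-1/2}\sum_{i=1}^n \eps_i \inr{X_i,B}$ is subgaussian with respect to the random metric $d_n(B,B')^2 = n^{-1}\sum_{i=1}^n \inr{X_i, B - B'}^2$. Taking expectations, the isotropy hypothesis (ii) gives $\E d_n(B,B')^2 = \|B - B'\|_H^2$, which is exactly the canonical metric of the Gaussian process $(G_B)_{B\in \bC}$ appearing in \eqref{eq:gauss_width}. Thus in $L^2$ the random and canonical metrics agree, and if this comparison could be upgraded to a subgaussian increment bound at every scale of a chaining argument, the conclusion would follow from Talagrand's majorizing measures theorem.

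Second, to derive the missing subgaussian increment bound, I would use the moment condition (ii). For coordinate directions $v = e_j$, Khintchine's inequality combined with (ii) yields $\bi{\E |n^{-1/2}\sum_i \eps_i \inr{X_i,e_j}|^k}^{1/k} \lesssim L\sqrt{k}$ for all $k \le c_1 \log d$. To pass from coordinate directions to arbitrary $v \in \bC$, I would exploit hypothesis (i): since $\sup_{v \in \bC}\inr{v,\cdot}$ is $K$-unconditional, a contraction and lattice argument permits the supremum of the Rademacher process to be controlled through expressions that depend on the $X_i$ only through the magnitudes $|\inr{X_i,e_j}|$, at the price of a factor depending on $K$ alone. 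The truncation threshold $c_1 \log d$ in (ii) is calibrated so that the ambient dimension $d$ can be absorbed at the coarsest chaining scale via a union bound.

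Third, once the conditional Bernoulli process has subgaussian increments with respect to $\|\cdot\|_H$ with a constant proportional to $C(L,K)$ and independent of $n$ and $d$, the generic chaining bound gives $\E \sup_{B \in \bC}|n^{-1/2}\sum_i \eps_i \inr{X_i,B}| \lesssim C(L,K)\,\gamma_2(\bC,\|\cdot\|_H)$, and the Fernique--Talagrand theorem identifies $\gamma_2(\bC,\|\cdot\|_H)$ with $\ell^*(\bC)$ up to an absolute constant, yielding \eqref{upper-bound-ell-star}.

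The main obstacle is the second step: transferring the scalar moment control in (ii), which is assumed only along the basis $(e_j)$ and only up to order $\log d$, into a uniform subgaussian increment bound over the continuous index set $\bC$. The $K$-unconditional structure on $\|\cdot\|_{\bC^\circ}$ is the crucial ingredient that makes this transfer possible, because it restricts the effective chaining to coordinate-aligned directions, for which the weak assumption (ii) is exactly what is needed.
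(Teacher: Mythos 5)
First, a point of comparison: the paper does not prove this statement at all --- it is quoted verbatim from \cite[Theorem~1.6]{shahar_expect} and used as a black box, so there is no internal proof to measure your plan against. Your proposal must therefore stand on its own as a reconstruction of Mendelson's argument, and as it stands it does not.

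The gap is at your second and third steps, and it is not a technical detail but the entire content of the theorem. You propose to upgrade the $L^2$-agreement $\E d_n(B,B')^2=\|B-B'\|_H^2$ to ``a subgaussian increment bound at every scale'' and then invoke the majorizing measure theorem. Conditionally on $X_1,\ldots,X_n$ the Bernoulli process is indeed subgaussian, but only with respect to the random empirical metric $d_n$, and under hypothesis (ii) --- isotropy plus $\psi_2$-type moment growth only up to order $c_1\log d$ and only along the basis directions --- there is no uniform comparison of $d_n$ with $\|\cdot\|_H$ over $\bC$; controlling $\sup_{B}d_n(B,0)/\|B\|_H$ is itself a norm-of-random-operator problem at least as hard as the one being solved. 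If the process did have $\|\cdot\|_H$-subgaussian increments with constant $C(L,K)$, the conclusion would follow exactly as in Proposition~\ref{prop:subgaussian}, and the unconditionality hypothesis (i) would be superfluous; the paper presents that proposition precisely as the \emph{alternative} route available only under the genuinely stronger assumption that $\inr{X,B}$ is subgaussian for every direction $B$. Your third paragraph gestures at the correct mechanism --- that $K$-unconditionality of $\|\cdot\|_{\bC^\circ}$ lets one bound $\sup_{B\in\bC}|n^{-1/2}\sum_i\eps_i\inr{X_i,B}| = \|n^{-1/2}\sum_i \eps_i X_i\|_{\bC^\circ}$ by a functional of the non-increasing rearrangement of the coordinates $(|\inr{X_i,e_j}|)_{i,j}$, whose order statistics are controlled by the $\log d$ moments and then compared with the corresponding Gaussian order statistics to produce $\ell^*(\bC)$ --- but you defer this to ``a contraction and lattice argument'' without carrying it out, and you explicitly flag it as ``the main obstacle.'' That obstacle is the theorem; the chaining framework you wrap around it does not apply in the form you state, so the proposal should be regarded as an incorrect route rather than an incomplete correct one.
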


 If  condition (i) of this theorem does not hold, i.e., if 
 $\|\cdot\|_{\mathbb C^\circ}$ is not an unconditional norm,
 one may derive a similar result under a more constraining assumption, namely that the random variable $\inr{X,B}$ is subgaussian for any $B\in H$.
The next proposition follows from the majorizing measure theorem,
 cf.
 \cite[Chapter 1]{MR3184689} or
 \cite[Chapter 6]{handel_lecture_notes}. 
 
\begin{proposition}
    \label{prop:subgaussian}
    Let $L\ge 1$ and let $H$ be a finite dimensional Hilbert space.
    Assume that $X$ is isotropic, i.e., it satisfies \eqref{isotropic}.
    Assume also that $X$ is $L$-subgaussian in the sense that
    for all $B \in H$ such that $\|B\|_H = 1$ we have 
    $\E\exp(t\inr{X,B}) \le \exp(t^2L^2/2)$ for all $t>0$.
    Then $X$ satisfies the small ball assumption with parameters
     $\beta_0,\kappa_0$ defined in \eqref{sb-params}.
    Furthermore, there exists an absolute constant $c_2>0$
    such that \eqref{upper-bound-ell-star} holds with $C(L, K)=c_2 L$ for any $\mathbb C\subset H$.
\end{proposition}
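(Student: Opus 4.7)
The plan is to handle the two claims of Proposition~\ref{prop:subgaussian} separately. For the small ball property I would reduce to Lemma~\ref{lemma:moments2-4}, and for the expectation bound I would use the fact that under subgaussianity the Rademacher process is dominated by the canonical Gaussian process through Talagrand's majorizing measure theorem.

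\textbf{Step 1: Small ball from subgaussianity.} I would verify hypothesis \eqref{assum:moments2-4} of Lemma~\ref{lemma:moments2-4}. By homogeneity it suffices to treat $B$ with $\|B\|_H=1$. Isotropy gives $\E\langle X,B\rangle^2 = 1$. Subgaussianity in the form $\E\exp(t\langle X,B\rangle)\le\exp(t^2 L^2/2)$ implies the moment bound $\E\langle X,B\rangle^{2k}\le (2k)!/(2^k k!)\,L^{2k}$; in particular $\E\langle X,B\rangle^4\le 3L^4$, so $\E[\langle X,B\rangle^4]^{1/4}\le 3^{1/4}L\le 2L = 2L\,\E[\langle X,B\rangle^2]^{1/2}$. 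Lemma~\ref{lemma:moments2-4} then yields Assumption~\ref{ass:small_ball} with $\beta_0=1/\sqrt{2}$ and $\kappa_0=1/(64L^4)$, exactly the values in \eqref{sb-params}.

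\textbf{Step 2: Subgaussian increments of the symmetrized process.} Let $Y_B \triangleq n^{-1/2}\sum_{i=1}^n \eps_i\langle X_i,B\rangle$. For any $B_1,B_2\in H$, $\langle X_i,B_1-B_2\rangle$ is $L\|B_1-B_2\|_H$-subgaussian by hypothesis; multiplication by the independent Rademacher $\eps_i$ preserves this parameter. A standard argument on sums of independent subgaussians then shows $Y_{B_1}-Y_{B_2}$ is $L\|B_1-B_2\|_H$-subgaussian, i.e., the process $(Y_B)_{B\in\mathbb C}$ is subgaussian on $(H,L\|\cdot\|_H)$.

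\textbf{Step 3: Comparison with the canonical Gaussian process.} Because $X$ is isotropic, the covariance of the Gaussian process $(G_B)_{B\in H}$ appearing in the definition \eqref{eq:gauss_width} is $\E G_{B_1}G_{B_2}=\E\langle B_1,X\rangle\langle X,B_2\rangle=\langle B_1,B_2\rangle$, so the canonical metric on $H$ induced by $G$ is exactly $\|\cdot\|_H$. By Talagrand's majorizing measure theorem (see \cite{MR3184689} or \cite{handel_lecture_notes}), for any subgaussian process whose increments are controlled by $L$ times a Gaussian process one has
\begin{equation*}
    \E\sup_{B\in\mathbb C}Y_B \le c_2 L\, \E\sup_{B\in\mathbb C}G_B = c_2 L\,\ell^*(\mathbb C),
\end{equation*}
for an absolute constant $c_2>0$. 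Taking absolute values inside the supremum costs only a constant (by a standard symmetrization or by reducing to $\mathbb C\cup(-\mathbb C)$), which can be absorbed into $c_2$, proving \eqref{upper-bound-ell-star} with $C(L,K)=c_2 L$.

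The main subtle point is Step~3: one must cite the majorizing measure theorem in a form that applies to arbitrary subgaussian processes (not only Gaussian ones) indexed by a set in a Hilbert space, and verify that the canonical metric for the Gaussian process coming from an isotropic $X$ is indeed the ambient Hilbert norm, which is what makes the bound dimension-free and free of any unconditionality assumption on $\|\cdot\|_{\mathbb C^\circ}$.
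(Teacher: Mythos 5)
Your proof follows essentially the same route as the paper's: reduce the small ball claim to Lemma~\ref{lemma:moments2-4} via a fourth-moment bound, and obtain \eqref{upper-bound-ell-star} by noting that the symmetrized process has subgaussian increments with respect to $L\|\cdot\\|_H$, which by isotropy is precisely the canonical metric of the Gaussian process defining $\ell^*(\bC)$, and then invoking the majorizing measure theorem for subgaussian processes. One correction in Step 1: the intermediate claim $\E\inr{X,B}^{2k}\le \frac{(2k)!}{2^k k!}L^{2k}$ (the exact Gaussian moments) does not follow from the MGF bound $\E e^{t\inr{X,B}}\le e^{t^2L^2/2}$; term-by-term comparison of the two power series is not legitimate, and the resulting constant $3L^4$ for the fourth moment can fail for non-Gaussian subgaussian variables. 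The standard consequence of the MGF bound (e.g.\ (2.3) in Theorem 2.1 of \cite{MR3185193}, which is what the paper cites) is $\E\inr{X,B}^4\le 16L^4$, and this weaker bound is exactly what is needed to conclude $\E\left[\inr{X,B}^4\right]^{1/4}\le 2L\,\E\left[\inr{X,B}^2\right]^{1/2}$, so your argument goes through unchanged once the constant is replaced.
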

\begin{proof}
    Let $Z=\inr{X,B}$ and assume w.l.o.g. that $\|B\|_H = 1$.
    The random variable $Z$ is $L$-subgaussian and, by isotropy,  $\E Z^2 = 1$.
    Thus
    by \cite[(2.3) from Theorem 2.1]{MR3185193} we have
    $\E Z^4 \le 16 L^4$, or equivalently $\E[Z^4]^{1/4} \le 2L \E[Z^2]^{1/2}$.
    By Lemma~\ref{lemma:moments2-4}, this implies that $X$ satisfies the small ball assumption with parameters
    $\beta_0,\kappa_0$ defined in \eqref{sb-params}.

    To prove \eqref{upper-bound-ell-star}, note that $\eps_i X_i$ is $L$-subgaussian. Thus, \eqref{upper-bound-ell-star}
    with $C(L, K)=c_2 L$ follows from the majorizing measure theorem for
    subgaussian processes, cf. \cite[Corollary 6.26]{handel_lecture_notes}.
\end{proof}


\section{Examples}\label{sec:examples}

In what follows, we denote by $|\cdot|_q$ the $\ell_q$ norm of a finite dimensional vector, $1\le q\le\infty$. We denote by $\|\cdot\|_{Fr}$ and by $\|\cdot\|_{sp}$
the Frobenius norm and the spectral norm of a matrix, respectively. Let $S_2^{p-1}$ and $B_q^{p}$ denote the unit Euclidean sphere in $\R^{p}$ and the unit $\ell_q$-ball in $\R^{p}$, respectively.
 The canonical basis of $\R^p$ is denoted by $(e_j)_{j=1, \ldots, p}$. For a vector  $\vbeta\in\R^p$ and  a  subset $S\subseteq \{1,\dots, p\}$, we denote by ${\rm supp}(\vbeta)$ the  support of $\vbeta$, by $\vbeta_S$ the orthogonal projection of $\vbeta$ onto the linear span
of $\{e_j:j\in S \}$, and by  $|S|$ the cardinality of $S$. We will write $a\lesssim b$ if there is an absolute constant $C>0$
such that $a\le Cb$.

\subsection{LASSO}

We consider here $H=\bB=\R^p$ equipped with the Euclidean norm $\|\cdot\|_H=|\cdot|_2$ and we define the regularization norm $\|\cdot\|$ as the $\ell_1$ norm. Then the estimator $\hbeta$ is the LASSO estimator
\begin{equation}\label{eq:def-lasso}
    \hbeta \in \argmin_{\vbeta\in \R^p}
    \left(
        \norm{\design\vbeta - \vy}_n^2
        + \lambda |\vbeta|_1
    \right)
\end{equation}
where $\lambda>0$ is a tuning parameter.

Given $\vbeta\in\R^p$ it is straightforward to verify that Assumption~\ref{assum:decomp} is satisfied when $\cP_\vbeta$ is the orthogonal projection operator onto the linear span
of $\{e_j:j\in {\rm supp}(\vbeta) \}$ where  $(e_j)_{j=1, \ldots, p}$ is the canonical basis of $\R^p$. 

 The operator $\design$ is a matrix in $\R^{n\times p}$ while the event $\Omega$
 in Theorem~\ref{th1} can be
written in the form
 \begin{equation}
        \Omega =\left\{  \,\frac 1 n |\design^T \vxi |_\infty \le \tau' \right\}.
        \label{eq:lasso-event}
        \end{equation}
        In order to apply Theorem~\ref{th1}, we need to find $\tau^\prime$ such that $\bP(\Omega)\geq 1/2$. Assume first
that $\design$ is deterministic.  The following lemma is a direct consequence of the normal tail probability bounds and the union bound, cf. \cite{bel-tsyb}.
    
    \begin{lemma} \label{lem:lass}
Let $(e_j)_{j=1, \ldots, p}$ be the canonical basis of $\R^p$ and let $\design$ be deterministic. If
\begin{equation}\label{lambda:lasso}
\tau' \ge \sigma \max_{1\leq j\leq p}\norm{\design e_j}_n \sqrt{\frac{2\log p}{n}},
\end{equation}  
then the event \eqref{eq:lasso-event} has probability at least 1/2.
\end{lemma}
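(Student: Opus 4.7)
The plan is to bound $\bP(\Omega^c)$ by a direct union bound over the $p$ coordinates of $\design^T\vxi$, using a sharp Gaussian tail bound.

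First, I would observe that
\[
\Omega^c = \Big\{\max_{1\le j\le p} \tfrac{1}{n}\bigl|\inr{\design e_j, \vxi}\bigr| > \tau'\Big\},
\]
and that, since $\vxi\sim\cN(\vzero,\sigma^2 I_{n\times n})$, each $\tfrac{1}{n}\inr{\design e_j,\vxi}$ is a centered Gaussian with variance $\sigma^2\norm{\design e_j}_n^2/n$. Hence the event $\{\tfrac{1}{n}|\inr{\design e_j,\vxi}|>\tau'\}$ is the event $\{|Z_j|>t_j\}$ for $Z_j$ standard normal and $t_j = \tau' \sqrt{n}/(\sigma\norm{\design e_j}_n)$. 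With the hypothesis \eqref{lambda:lasso} on $\tau'$, each $t_j$ satisfies $t_j\ge \sqrt{2\log p}$.

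Next, the key step is to use the sharp Mill's-ratio tail bound $\bP(|Z|>t)\le \frac{2}{t\sqrt{2\pi}}e^{-t^2/2}$ for $t>0$, rather than the cruder $\bP(|Z|>t)\le 2e^{-t^2/2}$ (which would only yield a trivial bound $\bP(\Omega^c)\le 2$ at the threshold $\sqrt{2\log p}$, and would not suffice). Evaluated at $t=\sqrt{2\log p}$, the sharp bound gives $\bP(|Z|>\sqrt{2\log p})\le \frac{1}{p\sqrt{\pi\log p}}$, so by the union bound,
\[
\bP(\Omega^c) \le \sum_{j=1}^p \bP\bigl(|Z_j| > \sqrt{2\log p}\bigr) \le \frac{1}{\sqrt{\pi\log p}}.
\]
This is at most $1/2$ as soon as $\log p\ge 4/\pi$, which is satisfied for the relevant range of $p$ (essentially $p\ge 4$); the threshold case can be handled by a trivial adjustment or absorbed into the statement's regime of applicability.

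The only subtle point, and the one I would flag as the main obstacle, is the use of the sharper Gaussian tail inequality: the na\"ive tail bound is off by a factor logarithmic in $p$ and does not yield the claimed probability $1/2$ at the stated level of $\tau'$. Everything else (isotropy of coordinates of $\design^T\vxi$ after rescaling by column norms, linearity of the Gaussian distribution under $\design^T$, and the union bound) is routine.
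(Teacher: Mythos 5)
Your proof is correct and is exactly the argument the paper intends: the paper gives no explicit proof, stating only that the lemma is ``a direct consequence of the normal tail probability bounds and the union bound,'' and your key observation --- that the Mills-ratio refinement $\bP(|Z|>t)\le \sqrt{2/\pi}\,e^{-t^2/2}/t$ is needed because the crude bound $\bP(|Z|>t)\le e^{-t^2/2}$ only yields $\bP(\Omega^c)\le 1$ --- is the right one. The only loose end is the range $p\in\{2,3\}$, where your chain of inequalities gives $1/\sqrt{\pi\log p}>1/2$ but a direct evaluation of the Gaussian tail at $t=\sqrt{2\log p}$ still gives $p\,\bP(|Z|>t)<1/2$, so the lemma holds there too (while $p=1$ is degenerate, since then the condition allows $\tau'=0$).
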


In view of this lemma, oracle inequalities for the LASSO estimator with tuning parameter $\lambda$ satisfying
\begin{equation}\label{eq:lasso_param}
\lambda\ge 10 \sigma \max_{1\leq j\leq p}\norm{\design e_j}_n \sqrt{\frac{2 \log p}{n}} 
\end{equation} follow from Theorem~\ref{th1} and Corollary~\ref{cor1}. They have the following form.

\begin{theorem}\label{theo:lasso}
Assume that $\vxi\sim \mathcal N(\vzero,\sigma^2 I_{n\times n})$ and that $\design$ is deterministic.
Let  $\delta\in (0,1)$. The LASSO estimator $\hbeta$  with tuning parameter satisfying \eqref{eq:lasso_param} is such that, with probability at least $1-\delta$,
\begin{equation*}
\|\design\hbeta-\vf\|_n^2\le\min_{\vbeta \in \bR^p }\Big[\|\design\vbeta-\vf\|_n^2+\frac{16}{25}\lambda^2 \mu_4^2 (\vbeta)\Big] + \frac{16\sigma^2(\Phi^{-1}(1-\delta))^2}{n}
\end{equation*}and 
\begin{equation*}
\E \|\design\hbeta-\vf\|_n^2\le\min_{\vbeta \in \bR^p}\Big[\|\design\vbeta-\vf\|_n^2+\frac{16}{25}\lambda^2 \mu_4^2 (\vbeta)\Big] + \frac{16\sigma^2}{n}.
\end{equation*} 
 If, in addition, $\vf =\design\vbeta^* $ for some $\vbeta^* \in \bR^p$, then with probability at least $1-\delta$,
\begin{equation*}
|\hbeta-\vbeta^*|_1\le4\lambda \mu_4^2 (\vbeta^*)+\frac{20\sigma^2 (\Phi^{-1}(1-\delta))^2}{n \lambda}    
\end{equation*}
and
\begin{equation*}
\E |\hbeta-\vbeta^*|_1\le8\lambda \mu_4^2 (\vbeta^*)+ \frac{20\sigma }{\lambda n}.
\end{equation*}
\end{theorem}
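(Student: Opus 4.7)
The plan is to derive Theorem~\ref{theo:lasso} as a direct application of Theorem~\ref{th1} and Corollary~\ref{cor1}, once the hypotheses have been checked in the LASSO setting. The three ingredients to verify are: (a) Assumption~\ref{assum:decomp} with $\mathbb A = \mathbb R^p$ and an appropriate choice of the projectors $\cP_\vbeta$; (b) the probability bound $\mathbb P(\Omega)\ge 1/2$ for a suitable $\tau'$; (c) the compatibility of the tuning parameter prescription \eqref{eq:lasso_param} with the condition $\lambda\ge 10\tau'$ of Theorem~\ref{th1}.

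For (a), I would fix $\vbeta\in\mathbb R^p$ with support $S={\rm supp}(\vbeta)$ and define $\cP_\vbeta$ to be the coordinate projection onto $\{e_j:j\in S\}$, so that $\cP_\vbeta\vbeta=\vbeta$ automatically. For any $B\in\mathbb R^p$, decomposability of the $\ell_1$ norm yields
\begin{equation*}
|\vbeta|_1+|\cP_\vbeta^\perp B|_1
= |\vbeta|_1 + |B_{S^c}|_1
= |\vbeta + \cP_\vbeta^\perp B|_1,
\end{equation*}
which is precisely Assumption~\ref{assum:decomp2} (equation \eqref{eq:assumption2_2}) and therefore implies Assumption~\ref{assum:decomp}. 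This gives $\mathbb A = \mathbb R^p$, and in particular the infimum/minimum in the oracle inequalities of Theorem~\ref{th1} and Corollary~\ref{cor1} ranges over all of $\mathbb R^p$.

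For (b) and (c), since $\|\vv\|$ is the $\ell_1$-norm, the dual norm calculation gives
\begin{equation*}
\sup_{\vv\in\mathbb R^p:\,|\vv|_1\le 1}\frac{1}{n}\vxi^T\design\vv = \frac{1}{n}|\design^T\vxi|_\infty,
\end{equation*}
so that $\Omega$ coincides with the event in \eqref{eq:lasso-event}. Lemma~\ref{lem:lass} then supplies $\tau' = \sigma\max_j\|\design e_j\|_n\sqrt{2\log p/n}$ with $\mathbb P(\Omega)\ge 1/2$, and the hypothesis \eqref{eq:lasso_param} is exactly $\lambda\ge 10\tau'$. The four bounds of Theorem~\ref{theo:lasso} are then obtained by invoking, respectively, \eqref{eq:soi-squared-lasso}, \eqref{eq:soi-squared-lasso-3}, \eqref{eq:soi-squared-lasso-2} and \eqref{eq:soi-squared-lasso-4}, noting that in the well-specified case the regularization norm $\|\hbeta-\vbeta^*\|$ specializes to $|\hbeta-\vbeta^*|_1$.

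There is essentially no hard step here: the work has already been done in Sections~3 and~4. The only item requiring a short argument is the verification of Assumption~\ref{assum:decomp} for the $\ell_1$ norm, and this is a one-line consequence of splitting a vector into its $S$ and $S^c$ parts. The tuning parameter $\lambda$ in \eqref{eq:lasso_param} does not depend on the confidence level $\delta$, which is what allows the high-probability bounds to be integrated into the in-expectation bounds via Corollary~\ref{cor1}, without any further probabilistic argument.
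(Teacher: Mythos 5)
Your proposal is correct and follows the same route as the paper: the paper also obtains Theorem~\ref{theo:lasso} by verifying Assumption~\ref{assum:decomp} for the $\ell_1$ norm via the coordinate projections onto ${\rm supp}(\vbeta)$ (through the decomposability property \eqref{eq:assumption2_2}), identifying $\Omega$ with the event \eqref{eq:lasso-event} and invoking Lemma~\ref{lem:lass} to get $\mathbb P(\Omega)\ge 1/2$ under \eqref{eq:lasso_param}, and then reading off the four bounds from Theorem~\ref{th1} and Corollary~\ref{cor1}. No gaps.
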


To make these inequalities more explicit, we need to control the compatibility factor $\mu_{c_0}(\vbeta)$. First note that one may use the Restricted Eigenvalue constant \cite{bickel2009simultaneous} to bound $\mu_{c_0}(\vbeta)$ from above. For any $S\subset\{1,\dots,p\}$ and $c_0> 0$, we define the Restricted Eigenvalue constant $\kappa(S,c_0)\ge 0$ by the formula  
\begin{equation}
    \kappa^2(S,c_0) \triangleq \min_{\vdelta\in\R^p\setminus \{0\}: |\vdelta_{S^c}|_1\le c_0|\vdelta_{S}|_1} \frac{\norm{\design\vdelta}_n^2}{|\vdelta|_2^2}.
    \label{eq:def-kappa}
\end{equation}
Therefore, for all $\vbeta$ such that $ \kappa^2({\rm supp}(\vbeta),c_0)\ne 0$ we obtain the bound 
$$
\mu_{c_0}^2(\vbeta)\le \frac{ |{\rm supp}(\vbeta)|}{\kappa^2({\rm supp}(\vbeta),c_0)}.
$$
When $\design$ is deterministic and $\vbeta$ is $s$-sparse (i.e., $|{\rm supp}(\vbeta)|\le s$), there exist various sufficient conditions on $\design$ allowing one to bound $\kappa^2({\rm supp}(\vbeta),c_0)$ from below by a universal constant, cf., e.g., \cite{bickel2009simultaneous}. This leads to the bound $\mu_{c_0}^2(\vbeta) \lesssim s$ for all $s$-sparse vectors $\vbeta$.

Consider now the case of random $\design$. Specifically, assume that $X_1, \ldots, X_n$ are i.i.d. realizations of a random vector $X$ with values in $\R^p$.  Then, it turns out that the bound $\mu_{c_0}^2(\vbeta) \lesssim s$ for $s$-sparse vectors $\vbeta$ can be guaranteed with high probability (with respect to the distribution of $X_1, \ldots, X_n$) provided that $n\gtrsim s\log(ep/s)$. 
Indeed, combining
Theorems~\ref{theo:small_ball_re} and~\ref{theo:shahar} we obtain the following result.

\begin{proposition}\label{prop:constant_mu_lasso}
Let $L\ge 1$ and let $\beta_0,\kappa_0$ be positive constants. 
There exist a constant $C(L)>0$ depending only on~$L$ and an absolute constant $c_1>0$ such that the following holds. 

Let $X_1, \ldots, X_n$ be i.i.d. realizations of a random vector $X$ with values in $\R^p$ such that 
\begin{itemize}
    \item[(i)] $X$ satisfies the small ball assumption (Assumption~\ref{ass:small_ball}) with parameters $\beta_0,\kappa_0$,
    \item[(ii)] $X$ is isotropic (i.e., $\E X X^\top = I_{p\times p}$) and for all
        $j=1, \ldots, p,$ and all positive integers $k$ smaller than $c_1\log p$
        we have
         $(\E \vert \inr{X, e_j} \vert^k)^{1/k}\leq L \sqrt{k}$.
\end{itemize}
Let $s\in\{1, \ldots, p\}$ and $c_0>0$.  If 
\begin{equation}
    \label{condition-n-s-p-lasso}
    n\geq C(L)[(1+c_0)/(\kappa_0\beta_0)]^2 s \log(ep/s),
\end{equation}
then with probability greater than $1-\exp(-n\kappa_0^2/32)$, for every $\vbeta\in\R^p$ such that $|{\rm supp}(\vbeta)|\leq s$ we have
\begin{equation*}
    \mu_{c_0}(\vbeta)\leq \sqrt{\frac{8|{\rm supp}(\vbeta)|}{\beta_0^2 \kappa_0}}.
\end{equation*}
\end{proposition}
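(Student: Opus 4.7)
The plan is to apply Theorem~\ref{theo:small_ball_re} with $\bA=\{\vbeta\in\R^p:|\supp(\vbeta)|\le s\}$ and exploit the bound \eqref{eq:condi_mu}. Two things must be checked: (a) the complexity hypothesis \eqref{eq:minimal_n_re} for this $\bA$ under the stated sample-size condition, and (b) a uniform upper bound on $\sup_{B\in\bC_{A,c_0}}\|\cP_A B\|/\|B\|_H$. Item (b) is immediate for the LASSO: since $\cP_A$ is the coordinate projection onto $\supp(A)$ and the regularization norm is $|\cdot|_1$, Cauchy--Schwarz gives $\|\cP_A B\|=|B_{\supp(A)}|_1\le\sqrt{|\supp(A)|}\,|B|_2=\sqrt{|\supp(A)|}\,\|B\|_H$, and then \eqref{eq:condi_mu} yields the advertised bound $\mu_{c_0}(\vbeta)\le\sqrt{8|\supp(\vbeta)|/(\beta_0^2\kappa_0)}$ as soon as (a) is established.

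For (a), I would dominate the index set by a symmetric convex set whose support function is unconditional so that Theorem~\ref{theo:shahar} applies. Any $B$ in $\bC_{A,c_0}$ with $|\supp(A)|\le s$ satisfies $|B|_1\le(1+c_0)|B_{\supp(A)}|_1\le(1+c_0)\sqrt s\,|B|_2$, hence
$$
S_2^{p-1}\cap\bigcup_{A\in\bA}\bC_{A,c_0}\;\subseteq\;T\triangleq (1+c_0)\sqrt s\,B_1^p\cap B_2^p.
$$
The set $T$ is invariant under coordinate permutations and sign changes, so its support function $\|\cdot\|_{T^\circ}=\sup_{u\in T}\inr{u,\cdot}$ is a $1$-unconditional norm. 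Hypothesis (ii) of the proposition is exactly hypothesis (ii) of Theorem~\ref{theo:shahar}, so that theorem applied to $\bC=T$ with $K=1$ yields
$$
\E\sup_{B\in T}\Big|\tfrac{1}{\sqrt n}\sum_{i=1}^n\eps_i\inr{X_i,B}\Big|\le C(L)\,\ell^*(T).
$$

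The remaining ingredient is the standard Gaussian-width estimate $\ell^*(T)\lesssim(1+c_0)\sqrt{s\log(ep/s)}$, which one derives either through a Maurey-type argument writing $\sqrt s\,B_1^p\cap B_2^p$ as contained in a constant multiple of the convex hull of $s$-sparse Euclidean unit vectors, or through a direct Dudley entropy bound for $B_1^p$. Dividing by $\sqrt n$ and requiring the resulting upper bound to be at most $\kappa_0\beta_0/16$ is exactly the sample-size condition \eqref{condition-n-s-p-lasso} for a suitable constant $C(L)$. Theorem~\ref{theo:small_ball_re} then delivers $\|\design B\|_n\ge\|B\|_H\sqrt{\beta_0^2\kappa_0/8}$ uniformly over $\bigcup_{A\in\bA}\bC_{A,c_0}$ on an event of probability at least $1-\exp(-n\kappa_0^2/32)$, and combining this with (b) through \eqref{eq:condi_mu} closes the proof.

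The main obstacle is packaging the complexity step so that Theorem~\ref{theo:shahar} can be invoked cleanly: the raw union of cones is not convex, so passing to the symmetric convex enlargement $T$ is essential; after that, the only quantitatively delicate piece is the Gaussian-width bound $\ell^*(T)\lesssim(1+c_0)\sqrt{s\log(ep/s)}$, which is what produces the characteristic LASSO rate together with the sharp $\log(ep/s)$ factor (rather than a cruder $\log p$) in \eqref{condition-n-s-p-lasso}.
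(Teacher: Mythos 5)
Your proposal is correct and follows essentially the same route as the paper's proof: reduce to the complexity condition \eqref{eq:minimal_n_re} via Theorem~\ref{theo:small_ball_re} and \eqref{eq:condi_mu}, bound $\|\cP_\vbeta\vbeta'\|$ by Cauchy--Schwarz, enclose the union of cones in $(1+c_0)\sqrt{s}\,B_1^p\cap B_2^p$ so that Theorem~\ref{theo:shahar} applies with a $1$-unconditional support function, and finish with the Gaussian-width estimate $\ell^*\bigl(\sqrt{s}B_1^p\cap B_2^p\bigr)\lesssim\sqrt{s\log(ep/s)}$. The only cosmetic difference is that the paper cites this width bound from the literature rather than sketching a Maurey or Dudley argument for it.
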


\begin{proof}
    Denote by $B_0(s)$ the set of all $s$-sparse vectors in $\R^p$:  $$B_0(s) = \{\vbeta\in \R^p: \,|{\rm supp}(\vbeta)|\leq s\}.$$ Let $\vbeta\in B_0(s)$ and recall that 
    $$\bC_{\vbeta,c_0}=\left\{\vbeta^\prime\in\R^p:|\cP_\vbeta^\perp\vbeta^\prime |_1\leq c_0 |\cP_{\vbeta}\vbeta^\prime|_1\right\}$$ where $\cP_\vbeta$ is the projection operator onto the linear span of $\{e_j:j\in {\rm supp}(\vbeta) \}$. It follows from Theorem~\ref{theo:small_ball_re} and \eqref{eq:condi_mu} that, if \eqref{eq:minimal_n_re}  with $\bA=B_0(s)$ holds, then with probability greater than $1-\exp(-n\kappa_0^2/32)$, for all $\vbeta\in B_0(s)$ we have 
\begin{equation*}
    \mu_{c_0}(\vbeta)\leq \left(\frac{8}{\beta_0^2\kappa_0}\right)^{1/2}\sup_{\vbeta^\prime\in\bC_{\vbeta,c_0}}\frac{|\cP_\vbeta \vbeta^\prime|_1}{|\vbeta^\prime|_2}\leq \left(\frac{8}{\beta_0^2\kappa_0}\right)^{1/2}\sqrt{|{\rm supp}(\vbeta)|}
\end{equation*}
where we have used that $|\cP_\vbeta\vbeta^\prime|_1\leq \sqrt{|{\rm supp}(\vbeta)|}|\vbeta^\prime|_2$ for all $\vbeta^\prime\in\R^p$. 

Therefore, it only remains to prove that \eqref{condition-n-s-p-lasso} implies \eqref{eq:minimal_n_re} with $\bA=B_0(s)$. First note that $S_2^{p-1}\cap \left(\cup_{\vbeta\in B_0(s)}\bC_{\vbeta, c_0}\right)\subset \bC$ where $\bC= \left((1+c_0)\sqrt{s}B_1^p\right)\cap B_2^{p}$. Since the $\ell_2$ and $\ell_1$ norms are $1$-unconditional, it is straightforward to check that $\norm{\cdot}_{\bC^\circ}=\sup_{v\in\bC}\inr{v,\cdot}$ is a $1$-unconditional norm. Therefore, we can apply Theorem~\ref{theo:shahar}, which gives 
\begin{align*}
   \E \sup_{\vbeta\in S_2^{p-1}\cap \left(\cup_{\vbeta\in B_0(s)}\bC_{\vbeta, c_0}\right)}\left|\frac{1}{n}\sum_{i=1}^n\eps_i \inr{X_i, \vbeta}\right|
   &\leq
   \E \sup_{\vbeta\in \bC}\left|\frac{1}{n}\sum_{i=1}^n\eps_i \inr{X_i, \vbeta}\right|, \\
   &\leq 
   \frac{c_2(L)\ell^*(\bC)}{\sqrt{n}}
   \leq
   \frac{c_3(L)(1+c_0)\sqrt{s \log(ep/s)}}{\sqrt{n}},
\end{align*}
where $c_2(L)$ and $c_3(L)$ are positive constants depending only on $L$ and where
we used that $\ell^*(\bC) \leq (1+c_0)\ell^*(\sqrt{s}B_1^p\cap B_2^{p})\leq c_4 (1+c_0) \sqrt{s \log(ep/s)}$ for some absolute constant $c_4$ (cf., for instance, Lemm{}a~5.3 in \cite{reg1}).
If \eqref{condition-n-s-p-lasso} holds with large enough constant $C(L)>0$ depending only on $L$,  then
the right hand side of the last display is bounded from above by $\beta_0\kappa_0/16$
and \eqref{eq:minimal_n_re} is satisfied.
\end{proof}

Combining Theorem~\ref{theo:lasso} and Proposition~\ref{prop:constant_mu_lasso} we can obtain oracle inequalities for the LASSO estimator when $X_1,\dots,X_n$ are i.i.d. random vectors {independent of the noise vector $\vxi$}. To illustrate it, consider the following result for the basic example where all entries of matrix $\design$ are i.i.d. standard Gaussian.

\begin{theorem}\label{theo:lasso:random}
Assume that $\vxi\sim \mathcal N(\vzero,\sigma^2 I_{n\times n})$ and that all entries of matrix $\design$ are i.i.d. standard Gaussian
random variables {independent of the noise vector $\vxi$}. 
Let  $\delta\in (0,1)$, and 
\begin{equation}\label{eq:lasso_param1}
\lambda = a \sigma  \sqrt{\frac{2 \log p}{n}} 
\end{equation}
where $a\ge 20$. There exist an absolute constant $C_1>0$ and a constant $C_2>0$ depending only on $a$ such that the following holds.
If $ n\geq C_1 s \log(ep/s)$, then for
the LASSO estimator $\hbeta$  with tuning parameter  \eqref{eq:lasso_param1} we have that, with probability at least $1-\delta-(p+1)e^{-n/C_1}$,
\begin{equation*}
\|\design\hbeta-\vf\|_n^2\le\min_{\vbeta \in B_0(s) }\Big[\|\design\vbeta-\vf\|_n^2+C_2 \sigma^2 \frac{|{\rm supp}(\vbeta)| \log p}{n}\Big] + \frac{16\sigma^2(\Phi^{-1}(1-\delta))^2}{n}.
\end{equation*}
If, in addition, $\vf =\design\vbeta^* $ for some $\vbeta^* \in B_0(s)$, then with probability at least $1-\delta-(p+1)e^{-n/C_1}$,
\begin{equation*}
|\hbeta-\vbeta^*|_1\le C_2 \sigma  \Big( s \sqrt{\frac{ \log p}{n}}+  \frac{(\Phi^{-1}(1-\delta))^2}{\sqrt{n\log p}}    \Big).
\end{equation*}
\end{theorem}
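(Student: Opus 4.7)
The plan is to combine Theorem \ref{theo:lasso}, applied conditionally on the random design $\design$, with Proposition \ref{prop:constant_mu_lasso} to eliminate the compatibility factor $\mu_4(\vbeta)$. Because $\design$ and $\vxi$ are independent, conditioning on $\design$ lets me treat it as deterministic when invoking Theorem \ref{theo:lasso}, provided I first restrict to a high-probability subset of designs on which the relevant column norms are bounded by an absolute constant.

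First I would control the column norms: $n\|\design e_j\|_n^2 \sim \chi^2_n$, so standard chi-squared concentration combined with a union bound produces an event $\cE_1 = \{\max_{1 \le j \le p}\|\design e_j\|_n \le 2\}$ of probability at least $1 - p e^{-n/c}$ for some absolute $c>0$. On $\cE_1$ the quantity $\sigma \max_j \|\design e_j\|_n \sqrt{2\log p / n}$ appearing in Lemma \ref{lem:lass} is at most $2\sigma\sqrt{2\log p/n}$, so the choice $\lambda = a\sigma\sqrt{2\log p/n}$ with $a \ge 20$ satisfies $\lambda \ge 10\tau'(\design)$ and Theorem \ref{theo:lasso} applies conditionally on $\design$. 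Next, the rows $X_i$ are standard Gaussian, hence isotropic and $1$-subgaussian, so Proposition \ref{prop:subgaussian} supplies Assumption \ref{ass:small_ball} with absolute constants $\beta_0$ and $\kappa_0$, while the moment condition of Proposition \ref{prop:constant_mu_lasso} holds trivially. Under the hypothesis $n \ge C_1 s \log(ep/s)$ for a sufficiently large absolute $C_1$, that proposition with $c_0 = 4$ then yields, on an event $\cE_2$ depending only on $\design$ of probability at least $1 - e^{-n\kappa_0^2/32}$, the bound $\mu_4^2(\vbeta) \lesssim |\mathrm{supp}(\vbeta)| \le s$ uniformly over $\vbeta \in B_0(s)$.

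I would then intersect $\cE_1$, $\cE_2$ and the $\delta$-event on which the conditional version of Theorem \ref{theo:lasso} holds, and substitute the bound $\mu_4^2(\vbeta) \lesssim |\mathrm{supp}(\vbeta)|$ together with $\lambda^2 \asymp a^2 \sigma^2 \log p/n$ into \eqref{eq:soi-squared-lasso} and \eqref{eq:soi-squared-lasso-2}. The prediction penalty $\tfrac{16}{25}\lambda^2\mu_4^2(\vbeta)$ becomes $\lesssim a^2 \sigma^2 |\mathrm{supp}(\vbeta)| \log p / n$, while in the well-specified case the two terms in the $\ell_1$ bound simplify to the order of $\sigma s \sqrt{\log p / n}$ and $\sigma (\Phi^{-1}(1-\delta))^2/\sqrt{n \log p}$ respectively, matching the statement with $C_2$ absorbing the dependence on $a$. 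A union bound over the three failure probabilities, after enlarging $C_1$ so that both $e^{-n\kappa_0^2/32} \le e^{-n/C_1}$ and $e^{-n/c} \le e^{-n/C_1}$, yields the overall failure probability $\delta + (p+1)e^{-n/C_1}$.

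The main thing to get right, rather than a genuine obstacle, is the bookkeeping of the two independent sources of randomness: the event $\cE_2$ that controls $\mu_4$ depends only on $\design$, whereas the $\delta$-event from Theorem \ref{theo:lasso} is \emph{conditional} on $\design$ and concerns only $\vxi$. Independence of $\design$ and $\vxi$ is precisely what allows these events to be intersected with only the union-bound loss, and thus makes the reduction to the deterministic-design results from Theorem \ref{theo:lasso} clean.
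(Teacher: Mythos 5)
Your proposal is correct and follows essentially the same route as the paper: plug the bound on $\mu_4$ from Proposition~\ref{prop:constant_mu_lasso} into the deviation inequalities of Theorem~\ref{theo:lasso}, verify that the choice $a\ge 20$ compensates for the factor $\max_j\|\design e_j\|_n\le 2$ (which holds with probability at least $1-pe^{-n/2}$, obtained in the paper via Gaussian concentration of the Lipschitz map $\vxi\mapsto\|\design e_j\|_n$ rather than your chi-squared bound, an immaterial difference), and take a union bound over the three events. Your explicit handling of the conditioning on $\design$ via independence is a point the paper leaves implicit, but it is the same argument.
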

\begin{proof}  We first plug the bound on $\mu_4$ given by Proposition~\ref{prop:constant_mu_lasso} into the oracle inequalities in deviation of Theorem~\ref{theo:lasso}. Then, we obtain resulting oracle inequalities that hold with probability $1-\delta-e^{-n/C_1}$ for all $\vbeta \in B_0(s)$. To finish the proof, it suffices to compare the definitions of $\lambda$ in \eqref{eq:lasso_param} and in \eqref{eq:lasso_param1}, and notice that 
\begin{equation}\label{eq:soilasso-21}
\bP(\max_{1\leq j\leq p}\norm{\design e_j}_n \le 2)\ge 1-pe^{-n/2}. 
\end{equation}
Indeed, the random variable $\zeta_j=\norm{\design e_j}_n$ is a $1/\sqrt{n}$-Lipschitz function of the standard Gaussian vector in $\R^n$. Thus, by the Gaussian concentration inequality, cf., e.g., \cite[Theorem 5.6]{MR3185193}, we get $\bP(\zeta_j>2)\le \bP(\zeta_j-\bE(\zeta_j)>1)\le e^{-n/2}$, where we have used that $\bE(\zeta_j)\le (\bE (\zeta_j^2) )^{1/2} =1$. This and the union bound yield \eqref{eq:soilasso-21}.
\end{proof}

\subsection{Group LASSO}

We consider here $H=\bB=\R^p$ equipped with the Euclidean norm $\|\cdot\|_H=|\cdot|_2$ and define the regularization norm $\norm{\cdot}$ as follows. Let $G_1,\ldots,G_M$ be a partition of $\{1,\ldots,p\}$. For any $\vbeta\in\R^p$ we set
\begin{equation}\label{glasso_norm}
\norm{\vbeta} = |\vbeta|_{2,1} \triangleq \sum_{k=1}^M |\vbeta_{G_k}|_2.
\end{equation}
The group LASSO estimator is a solution of the convex minimization problem
\begin{equation}
    \hbeta \in \argmin_{\vbeta\in\R^p}\left(
    \norm{\vy-\design\vbeta}_n^2
    + \lambda \sum_{k=1}^M |\vbeta_{G_k}|_2\right),
    \label{eq:def-glasso}
\end{equation}
 where $\lambda>0$ is a tuning parameter. In the following, we assume that the groups $G_k$ have the same cardinality $|G_k|= T=p/M$, $k=1,\dots,M$. 
 
 To any $\vbeta\in \R^p$, we associate the set $${\mathcal K}(\vbeta)=\{k\in\{1,...,M\}: \vbeta_{G_k}\ne \vzero\},$$ 
 which plays the role of ``support by block'' of vector $\vbeta$. Given $\vbeta\in\R^p$, it is straightforward to check that Assumption~\ref{assum:decomp} is satisfied when $\cP_\vbeta$ is the orthogonal projection operator onto the linear span of $\left\{e_j:j \in \cup_{k\in\cK(\vbeta)} G_{k}\right\}$.

The operator $\design$ is a matrix in $\R^{n\times p}$ while the event $\Omega$
 in Theorem~\ref{th1}  takes now the form
 \begin{equation}
        \Omega =\left\{ \max_{k=1,...,M} \,\frac 1 n |\design_{G_k}^T \vxi |_2 \le \tau' \right\}
        \label{eq:glasso-event}
    \end{equation}
    where  $\design_{G_k}$ is the $n\times |G_k|$ submatrix of $\design$ composed from all the columns of $\design$ with indices in $G_k$.

In order to apply Theorem~\ref{th1}, we need to find $\tau^\prime$ such that $\bP(\Omega)\geq 1/2$.     
Denote by $\|\design_{G_k}\|_{sp}\triangleq \sup_{|\vx|_2\le 1}|\design_{G_k}\vx|_2$ the spectral norm of matrix $\design_{G_k}$ and by $\|\design_{G_k}\|_{Fr}$ its Frobenius norm.  Then, set $\psi_{sp}^*  =\max_{k=1,...,M}\|\design_{G_k}\|_{sp}/\sqrt{n}$ and $\psi_{Fr}^*  =\max_{k=1,...,M}\|\design_{G_k}\|_{Fr}/\sqrt{n}$. The constant $\tau'$ is determined by the following straightforward modification of Lemma~2 in \cite{bel-tsyb}.

\begin{lemma} \label{lem:group}
Let $\design$ be deterministic. If 
\begin{equation}\label{lambda:glasso}
\tau' \ge \frac{\sigma}{\sqrt{n}}\left(\psi^*_{Fr} + \psi_{sp}^* \sqrt{2\log(2 M)}\right),
\end{equation}  
then the event \eqref{eq:glasso-event} has probability at least 1/2.
\end{lemma}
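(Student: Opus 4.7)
The plan is to control each coordinate $a_k \triangleq \frac{1}{n}|\design_{G_k}^T \vxi|_2$ separately via Gaussian concentration and then take a union bound over $k\in\{1,\dots,M\}$. Write $\vxi = \sigma Z$ with $Z\sim \cN(\vzero, I_{n\times n})$, so that $a_k = (\sigma/n)|\design_{G_k}^T Z|_2$. The mapping $Z \mapsto |\design_{G_k}^T Z|_2$ is Lipschitz with respect to the Euclidean norm on $\R^n$, with Lipschitz constant $\|\design_{G_k}\|_{sp}$, since for any $z, z'\in\R^n$,
\[
\bigl||\design_{G_k}^T z|_2 - |\design_{G_k}^T z'|_2\bigr|\le |\design_{G_k}^T(z - z')|_2 \le \|\design_{G_k}\|_{sp}\,|z-z'|_2.
\]

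Next, I would bound the expectation via Jensen: $\bE|\design_{G_k}^T Z|_2 \le (\bE|\design_{G_k}^T Z|_2^2)^{1/2}= \|\design_{G_k}\|_{Fr}$. The Gaussian concentration inequality (as in the same reference cited in the paper) then yields, for all $t>0$,
\[
\bP\!\left(|\design_{G_k}^T Z|_2 > \|\design_{G_k}\|_{Fr} + \|\design_{G_k}\|_{sp}\, t\right)\le e^{-t^2/2}.
\]
Choosing $t = \sqrt{2\log(2M)}$ gives a tail probability of $1/(2M)$ for each fixed $k$. Dividing by $n/\sigma$ and using the definitions of $\psi^*_{Fr}$ and $\psi^*_{sp}$, this rewrites as
\[
\bP\!\left(a_k > \frac{\sigma}{\sqrt n}\bigl(\psi^*_{Fr} + \psi^*_{sp}\sqrt{2\log(2M)}\bigr)\right) \le \frac{1}{2M}.
\]

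Finally, a union bound over $k = 1,\dots,M$ shows that the event $\{\max_k a_k > (\sigma/\sqrt n)(\psi^*_{Fr} + \psi^*_{sp}\sqrt{2\log(2M)})\}$ has probability at most $1/2$. Hence, under the assumption on $\tau'$, the event $\Omega$ in \eqref{eq:glasso-event} has probability at least $1/2$, as claimed. There is no serious obstacle in this argument; the only thing to be careful with is tracking the normalization factors $\sigma$ and $1/n$ correctly so that the Lipschitz and Frobenius terms combine exactly into $\psi^*_{sp}$ and $\psi^*_{Fr}$.
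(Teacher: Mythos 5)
Your proof is correct and is exactly the argument the paper intends: the lemma is stated as a ``straightforward modification of Lemma~2 in \cite{bel-tsyb}'', whose proof is precisely this combination of Gaussian concentration for the $\|\design_{G_k}\|_{sp}$-Lipschitz map $Z\mapsto|\design_{G_k}^T Z|_2$, the Jensen bound $\bE|\design_{G_k}^T Z|_2\le\|\design_{G_k}\|_{Fr}$, and a union bound over the $M$ groups. The normalizations combine into $\psi^*_{Fr}$ and $\psi^*_{sp}$ exactly as you indicate, so there is nothing to add.
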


Using this lemma, oracle inequalities for the group LASSO estimator with tuning parameter $\lambda$ satisfying 
\begin{equation}\label{eq:group_lasso_param}
\lambda\ge \frac{10\sigma}{\sqrt{n}}\left(\psi^*_{Fr} + \psi_{sp}^* \sqrt{2\log(2 M)}\right)  
\end{equation} 
can be deduced from Theorem~\ref{th1} and Corollary~\ref{cor1}. They have the following form.

\begin{theorem}\label{theo:glasso}
Assume that $\vxi\sim \mathcal N(\vzero,\sigma^2 I_{n\times n})$ and that $\design$ is deterministic.
Let  $\delta\in (0,1)$. The group LASSO estimator $\hbeta$  with tuning parameter satisfying \eqref{eq:group_lasso_param} is such that, with probability at least $1-\delta$,
\begin{equation*}
\|\design\hbeta-\vf\|_n^2\le\min_{\vbeta \in \bR^p }\Big[\|\design\vbeta-\vf\|_n^2+\frac{16}{25}\lambda^2 \mu_4^2 (\vbeta)\Big] + \frac{16\sigma^2(\Phi^{-1}(1-\delta))^2}{n}
\end{equation*}and 
\begin{equation*}
\E \|\design\hbeta-\vf\|_n^2\le\min_{\vbeta \in \bR^p}\Big[\|\design\vbeta-\vf\|_n^2+\frac{16}{25}\lambda^2 \mu_4^2 (\vbeta)\Big] + \frac{16\sigma^2}{n}.
\end{equation*} 
 If, in addition, $\vf =\design\vbeta^* $ for some $\vbeta^* \in \bR^p$, then with probability at least $1-\delta$,
\begin{equation*}\label{eq:soiglasso-2}
|\hbeta-\vbeta^*|_{2,1}\le4\lambda \mu_4^2 (\vbeta^*)+\frac{20\sigma^2 (\Phi^{-1}(1-\delta))^2}{n \lambda}    
\end{equation*}
and
\begin{equation*}
\E |\hbeta-\vbeta^*|_{2,1}\le8\lambda \mu_4^2 (\vbeta^*)+ \frac{20\sigma }{\lambda n}.
\end{equation*}
\end{theorem}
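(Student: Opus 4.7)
The plan is to reduce this theorem to direct applications of Theorem~\ref{th1} and Corollary~\ref{cor1} with $\bA=\R^p$ and regularization norm $\|\cdot\|=|\cdot|_{2,1}$, exactly mirroring the derivation of Theorem~\ref{theo:lasso} in the LASSO case.

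The first step would be to verify the decomposability Assumption~\ref{assum:decomp2} (which implies Assumption~\ref{assum:decomp}) for $\bA=\R^p$ with the projector $\cP_\vbeta$ defined in the paper as the orthogonal projection onto the linear span of $\{e_j: j\in\cup_{k\in\cK(\vbeta)} G_k\}$. Clearly $\cP_\vbeta\vbeta=\vbeta$. For any $B\in\R^p$, the vector $\cP_\vbeta^\perp B$ vanishes on each block $G_k$ with $k\in\cK(\vbeta)$, so $\vbeta$ and $\cP_\vbeta^\perp B$ have disjoint block supports. Block-wise summation then yields
\[
|\vbeta+\cP_\vbeta^\perp B|_{2,1}=\sum_{k\in\cK(\vbeta)}|\vbeta_{G_k}|_2+\sum_{k\notin\cK(\vbeta)}|(\cP_\vbeta^\perp B)_{G_k}|_2=|\vbeta|_{2,1}+|\cP_\vbeta^\perp B|_{2,1},
\]
which is exactly \eqref{eq:assumption2_2}.

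Next, I would invoke Lemma~\ref{lem:group} with $\tau'=\lambda/10$: the hypothesis \eqref{lambda:glasso} on $\tau'$ then coincides with the assumption \eqref{eq:group_lasso_param} on $\lambda$. The identification of the event $\Omega$ from Theorem~\ref{th1} with the form \eqref{eq:glasso-event} relies on the duality identity $\sup_{|\vv|_{2,1}\le 1} \vv^T \vw = \max_{k=1,\ldots,M} |\vw_{G_k}|_2$ applied to $\vw=\design^T \vxi/n$. Together these give $\bP(\Omega)\ge 1/2$ and $\lambda\ge 10\tau'$, so every hypothesis of Theorem~\ref{th1} and Corollary~\ref{cor1} is met with $\bA=\R^p$.

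Finally, applying Theorem~\ref{th1} and Corollary~\ref{cor1} with $\bA=\R^p$ produces the four inequalities stated in the theorem; the estimation bounds in $|\cdot|_{2,1}$ come from the well-specified parts of these statements since $|\cdot|_{2,1}$ is precisely the regularization norm $\|\cdot\|$ featured in Theorem~\ref{th1}. There is essentially no real obstacle here: the work consists entirely in the decomposability verification and in recognizing, via duality, that the supremum appearing in $\Omega$ reduces to $\max_k |\design_{G_k}^T \vxi|_2/n$, both of which are elementary consequences of the block structure of the group LASSO norm.
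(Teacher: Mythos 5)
Your proposal is correct and follows exactly the route the paper intends: the paper gives no separate proof of Theorem~\ref{theo:glasso}, simply noting that it follows from Theorem~\ref{th1} and Corollary~\ref{cor1} once the block decomposability of $|\cdot|_{2,1}$ is checked and Lemma~\ref{lem:group} supplies $\tau'=\lambda/10$ with $\bP(\Omega)\ge 1/2$. Your explicit verification of Assumption~\ref{assum:decomp2} and of the duality identity behind \eqref{eq:glasso-event} fills in precisely the "straightforward to check" steps the paper leaves implicit.
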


Consider now a control of parameter $\mu_{c_0}(\vbeta)$ for vectors $\vbeta$ with a ``sparse by block" structure.  To that end, one can use the ``group" analog of the Restricted Eigenvalue constant, cf.~\cite{lptv2011}. For any $S\subset\{1,...,M\}$ and $c_0> 0$,
we define the group Restricted Eigenvalue constant $\kappa_G(S,c_0)\ge 0$ by the formula
\begin{equation}
    \kappa^2_G(S,c_0) \triangleq \min_{\vdelta\in\mathcal C(S,c_0)} \frac{\norm{\design\vdelta}_n^2}{|\vdelta|_2^2},
    \label{eq:def-kappa-g}
\end{equation}
where $\mathcal C(S,c_0)$ is the cone
$$
\mathcal C (S,c_0)\triangleq \left\{\vdelta\in\R^p\setminus \{0\}: \sum_{k\in S^c}|\vdelta_{G_k}|_2\le c_0\sum_{k\in S}|\vdelta_{G_k}|_2\right\}.
$$
In particular, for all $\vbeta\in\R^p$ with $\kappa_G({\mathcal K}(\vbeta),c_0)\ne 0$ we have
\begin{equation*}
\mu_{c_0}^2(\vbeta)\leq \frac{|\cK(\vbeta)|}{\kappa_G^2(\cK(\vbeta), c_0)}.
\end{equation*}
When $\design$ is deterministic and $\vbeta$ is such that $|\cK(\vbeta)|\leq s$
 sufficient conditions on $\design$ allowing one to bound $\kappa_G^2({\rm supp}(\vbeta),c_0)$ from below by a universal constant can be found in~\cite{lptv2011}. This leads to the bound $\mu_{c_0}^2(\vbeta) \lesssim s$ for all vectors $\vbeta$ such that $|\cK(\vbeta)|\leq s$ (i.e., all {\it $s$-sparse by block vectors}).

Finally, we give an upper bound on $\mu_{c_0}(\vbeta)$ in the case of random $\design$. Let $X_1, \ldots, X_n$ be i.i.d. realizations of a random vector $X$ with values in $\R^p$.  The following proposition shows that, with high probability (with respect to the distribution of  $X_1, \ldots, X_n$), we have $\mu_{c_0}^2(\vbeta)\lesssim |\cK(\vbeta)|$ for all $s$-sparse by block vectors $\vbeta\in\R^p$ provided that $n\gtrsim s(T +  \log(M/s))$.

\begin{proposition} Let $L\ge 1$. Let $X_1, \ldots, X_n$ be i.i.d. realizations of a random vector $X$ with values in $\R^p$ such that 
\begin{itemize}
    \item[(i)] $X$ is isotropic (i.e., $\E X X^\top = I_{p\times p}$),    
    \item[(ii)] $X$ is $L$-subgaussian: $\bE\exp\left(t\inr{X, \vbeta}\right)\leq \exp(L^2 t^2/2)$ for all $t>0$ and all $ \vbeta\in\R^{p}$ such that $| \vbeta|_2=1$.
    \end{itemize}
Let $s\in\{1, \ldots, M\}$ and $c_0>0$. There exist positive constants $C(L)$ and $C'(L)$ depending only on~$L$ such that the following holds. If 
\begin{equation}
    \label{condition-s-T-M-group-lasso}
     n\geq C(L) (1+c_0)^2 s \left(T+\log(M/s)\right)  
\end{equation}
then with probability greater than $1-\exp(-C'(L) n)$, for any $ \vbeta\in\R^{p}$ such that $|\cK(\vbeta)|\leq s$ we have 
\begin{equation*}
    \mu_{c_0}( \vbeta)\leq 32 L^2\sqrt{|\cK(\vbeta)|}.
\end{equation*}
\end{proposition}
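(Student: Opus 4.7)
The plan is to apply Theorem~\ref{theo:small_ball_re} together with the reduction \eqref{eq:condi_mu} to the cones $\bC_{\vbeta,c_0}$ coming from the group LASSO, and to control the Gaussian complexity appearing in \eqref{eq:minimal_n_re} via Proposition~\ref{prop:subgaussian}. The reason we use Proposition~\ref{prop:subgaussian} rather than Theorem~\ref{theo:shahar} is that the dual of the group norm $|\cdot|_{2,1}$ is not unconditional with respect to the canonical basis of $\R^p$, whereas the subgaussian assumption lets us bypass the unconditionality requirement.

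First, since $X$ is isotropic and $L$-subgaussian, Proposition~\ref{prop:subgaussian} gives us directly that $X$ satisfies the small ball assumption with $\beta_0=1/\sqrt 2$ and $\kappa_0=1/(64L^4)$, and yields the comparison $\E\sup_{B\in\bC}|n^{-1/2}\sum_i\eps_i\inr{X_i,B}|\le c_2L\,\ell^*(\bC)$ for any $\bC\subset H$. We next verify \eqref{eq:minimal_n_re} for $\bA=\{\vbeta\in\R^p:|\cK(\vbeta)|\le s\}$. For any $\vbeta\in\bA$ and any $B\in\bC_{\vbeta,c_0}\cap S_2^{p-1}$, the triangle inequality and the decomposability of $|\cdot|_{2,1}$ give
\begin{equation*}
|B|_{2,1}=|\cP_\vbeta B|_{2,1}+|\cP_\vbeta^\perp B|_{2,1}\le(1+c_0)|\cP_\vbeta B|_{2,1}\le(1+c_0)\sqrt{s}\,|B|_2\le(1+c_0)\sqrt{s},
\end{equation*}
where we used Cauchy--Schwarz on the at most $s$ groups indexed by $\cK(\vbeta)$. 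Hence the cone intersected with the Euclidean sphere is contained in $\bC\triangleq (1+c_0)\sqrt{s}\,B_{2,1}^p\cap B_2^p$, where $B_{2,1}^p$ is the unit ball of $|\cdot|_{2,1}$.

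The second step is to estimate $\ell^*(\bC)$. A standard computation for the Gaussian mean width of the group-sparse $\ell_1/\ell_2$-ball intersected with the Euclidean ball (see for instance the arguments in \cite{reg1,lptv2011}) gives $\ell^*(\sqrt{s}B_{2,1}^p\cap B_2^p)\lesssim \sqrt{s(T+\log(M/s))}$, and therefore
\begin{equation*}
\E \sup_{B\in\bC}\left|\frac{1}{n}\sum_{i=1}^n\eps_i\inr{X_i,B}\right|\le \frac{c(L)(1+c_0)\sqrt{s(T+\log(M/s))}}{\sqrt n}.
\end{equation*}
If \eqref{condition-s-T-M-group-lasso} holds with $C(L)$ large enough, the right-hand side is bounded by $\beta_0\kappa_0/16$, so that \eqref{eq:minimal_n_re} is in force.

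Finally, apply Theorem~\ref{theo:small_ball_re}: on an event of probability at least $1-\exp(-n\kappa_0^2/32)$ we obtain, for every $B$ in $\cup_{\vbeta\in\bA}\bC_{\vbeta,c_0}$, the lower bound $\|\design B\|_n\ge |B|_2\sqrt{\beta_0^2\kappa_0/8}$. For $\vbeta\in\bA$ and $B\in\bC_{\vbeta,c_0}$, Cauchy--Schwarz as above yields $|\cP_\vbeta B|_{2,1}\le\sqrt{|\cK(\vbeta)|}\,|B|_2$, so \eqref{eq:condi_mu} gives
\begin{equation*}
\mu_{c_0}(\vbeta)\le\left(\frac{8}{\beta_0^2\kappa_0}\right)^{1/2}\sqrt{|\cK(\vbeta)|}=\sqrt{1024\,L^4\,|\cK(\vbeta)|}=32L^2\sqrt{|\cK(\vbeta)|},
\end{equation*}
taking $C'(L)=\kappa_0^2/32=1/(32\cdot 64^2 L^8)$. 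The main technical point is the bound on the Gaussian mean width $\ell^*(\sqrt{s}B_{2,1}^p\cap B_2^p)\lesssim \sqrt{s(T+\log(M/s))}$; everything else is a direct combination of the abstract small-ball machinery with the projector $\cP_\vbeta$ associated with the group LASSO.
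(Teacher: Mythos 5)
Your proposal is correct and follows essentially the same route as the paper: small ball parameters and the mean-width comparison via Proposition~\ref{prop:subgaussian}, the inclusion of $S_2^{p-1}\cap\bigl(\cup_{\vbeta}\bC_{\vbeta,c_0}\bigr)$ into $(1+c_0)\sqrt{s}\,B_{2,1}^p\cap B_2^p$, and then Theorem~\ref{theo:small_ball_re} with \eqref{eq:condi_mu}, with identical constants. The only point you defer to the literature --- the bound $\ell^*(\sqrt{s}B_{2,1}^p\cap B_2^p)\lesssim\sqrt{s(T+\log(M/s))}$ --- is exactly the step the paper proves in full, via a nonincreasing rearrangement of the block norms $(|\veta_{G_k}|_2)_k$ followed by a Jensen/exponential-moment estimate.
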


\begin{proof}
 Since $X$ is $L$-subgaussian and isotropic, it follows from Proposition~\ref{prop:subgaussian}
 that $X$ satisfies the small ball assumption with parameters
 $\beta_0,\kappa_0$ defined in \eqref{sb-params}.

The definition of $\norm{\cdot}$ in \eqref{glasso_norm} and the fact that $\cP_\vbeta$ is the projection operator onto the linear span of $\left\{e_j:j\in\cup_{k\in\cK(\vbeta)}G_k\right\}$ imply
\begin{eqnarray*}\bC_{ \vbeta,c_0}
&=&
\Big\{ \vbeta^\prime\in\R^{p}:\norm{\cP_ \vbeta^\perp \vbeta^\prime }\leq c_0 \norm{\cP_{ \vbeta} \vbeta^\prime}\Big\} 
\\ &=& 
\Big\{\vbeta^\prime\in\R^p: \sum_{k\in \cK(\vbeta)^c}|\vbeta_{G_k}^\prime|_2\le c_0\sum_{k\in \cK(\vbeta)}|\vbeta_{G_k}^\prime|_2\Big\}.
\end{eqnarray*}
Denote by $\Sigma_s$ the set of all vectors $\vbeta$ in $\R^{p}$ such that $|\cK(\vbeta)|\leq s$. 
It follows from Theorem~\ref{theo:small_ball_re} and \eqref{eq:condi_mu} that, if \eqref{eq:minimal_n_re} holds with $\bA=\Sigma_s$,  then with probability at least $1-\exp(-n\kappa_0^2/32)$, for all $\vbeta\in \Sigma_s$  we have
\begin{equation*}
    \mu_{c_0}( \vbeta)
    \leq
    \left(\frac{8}{\beta_0^2\kappa_0}\right)^{1/2}\sup_{ \vbeta^\prime\in\bC_{ \vbeta,c_0}}\frac{\norm{\cP_\vbeta  \vbeta^\prime}}{| \vbeta^\prime|_2}
    \leq
    \sqrt{\frac{8|\cK( \vbeta)|}{\beta_0^2\kappa_0} } = 32 L^2 \sqrt{|\cK( \vbeta)|} 
\end{equation*}
since $\norm{\cP_\vbeta \vbeta^\prime}\leq \sqrt{|\cK(\vbeta)|}| \vbeta^\prime|_2$ for all $ \vbeta^\prime\in\R^{p}$. 

Therefore, it only remains to prove that  \eqref{condition-s-T-M-group-lasso} implies \eqref{eq:minimal_n_re} with $\bA=\Sigma_s$.
Denote by $B$ the unit ball with respect to the group LASSO norm  $\norm{\cdot}$ in $\R^{p}$. It is straightforward to check that $S_2^{p-1}\cap \left(\cup_{ \vbeta\in\Sigma_s}\bC_{ \vbeta, c_0}\right)\subset \bC$ where $\bC= \left((1+c_0)\sqrt{s}B \right)\cap B_2^{p}$. 
By Proposition~\ref{prop:subgaussian}, we have,
for an absolute constant $c_2>0$,
\begin{equation*}
   \E \sup_{ \vbeta\in S_2^{p-1}\cap \left(\cup_{ \vbeta\in\Sigma_s}\bC_{ \vbeta, c_0}\right)}\left|\frac{1}{n}\sum_{i=1}^n\eps_i \inr{X_i,  \vbeta}\right|\leq  \E \sup_{ \vbeta\in \bC}\left|\frac{1}{n}\sum_{i=1}^n\eps_i \inr{X_i,  \vbeta}\right|\leq \frac{c_2 L \ell^*(\bC)}{\sqrt{n}}.
\end{equation*}
We now bound $\ell^*(\bC)$ from above.
First note that $\ell^*(\bC) \leq (1+c_0)\ell^*(\sqrt{s}B\cap B_2^{p})$. Denote by $\veta=(\eta_j)_{j=1}^p$ a standard Gaussian vector in $\R^p$ and by $(\zeta^\sharp_k)_{k=1}^M$ a non-increasing rearrangement of $(|\veta_{G_k}|_2)_{k=1}^M$.  We have
\begin{align*}
&\ell^*(\sqrt{s}B\cap B_2^{p}) = \E \sup\left(\sum_{k=1}^M t_k \zeta_k:\sum_{k=1}^M |t_k|\leq \sqrt{s}, \sum_{k=1}^M t_k^2\leq1\right)\\
&\leq \E \sup\left(\sum_{k=1}^M t_k^\sharp \zeta_k^\sharp:\sum_{k=1}^M |t_k|\leq \sqrt{s}, \sum_{k=1}^M t_k^2\leq1\right)\\
&\leq \E \sup\left(\sum_{k=1}^s t_k^\sharp \zeta_k^\sharp: \sum_{k=1}^M t_k^2\leq1\right) + \E \sup\left(\sum_{k=s+1}^M t_k^\sharp \zeta_k^\sharp:\sum_{k=1}^M |t_k|\leq \sqrt{s}\right)\\
&=\E \sqrt{\sum_{k=1}^s (\zeta_k^\sharp)^2}
+ \sqrt{s}\E\max_{k=s+1, \ldots, M}\zeta_k^\sharp\leq 2 \E \sqrt{\sum_{k=1}^s (\zeta_k^\sharp)^2}
\\
&\leq \frac{ 2\sqrt{8s}}{\sqrt{3}}\left[\E \left(\frac{1}{s}\sum_{k=1}^s \frac{3(\zeta^\sharp_k)^2}{8}\right)\right]^{1/2}.
\end{align*}Then,  using Jensen's inequality we obtain
\begin{align*}
&\E \left(\frac{1}{s}\sum_{k=1}^s \frac{3(\zeta^\sharp_k)^2}{8}\right)
\leq 
\log \E \exp\left(\frac{1}{s}\sum_{k=1}^s \frac{3(\zeta^\sharp_k)^2}{8}\right)
\\
&\leq 
\log \E \exp\left(\frac{1}{s}\sum_{k=1}^M \frac{3|\veta_{G_k}|_2^2}{8}\right)
\le 
\log \left( \frac{1}{s}\sum_{k=1}^M\E\exp\left( \frac{3|\veta_{G_k}|_2^2}{8}\right)\right)\\
&= \log \left( \frac{1}{s}\sum_{k=1}^M \prod_{j\in G_k}\E\exp\left( \frac{3\eta_j^2}{8}\right)\right) = \log\left(\frac{2^TM}{s}\right).
\end{align*}
Therefore, there exists an absolute constant $c'>0$ such that 
\begin{equation*}
\frac{c_2L\ell^*(\bC)}{\sqrt{n}}
   \leq \frac{c'L(1+c_0)\sqrt{sT + s\log(M/s)}}{\sqrt{n}} .
\end{equation*}
For $\beta_0$ and $\kappa_0$ given in \eqref{sb-params}, the expression in the last display can be rendered smaller than $\kappa_0\beta_0 /16$ provided that \eqref{condition-s-T-M-group-lasso} holds with large enough constant $C(L)>0$ depending only on $L$. Thus, \eqref{eq:minimal_n_re} follows.
\end{proof}


\subsection{Nuclear norm penalty}

We consider here $H=\bB=\R^{k\times m}$ equipped with the Frobenius  norm $\|\cdot\|_H=\|\cdot\|_{Fr}$ and we define the regularization norm $\|\cdot\|$ as the nuclear norm $\|\cdot\|_*$ (i.e., the sum of the singular values). The corresponding penalized LS estimator $\hat A$ is a solution of the minimization problem
\begin{equation}\label{eq:def-nuclear}
    \hat A \in \argmin_{A\in \R^{k\times m}}
    \left(
        \norm{\design A - \vy}_n^2
        + \lambda \|A\|_*
    \right)
\end{equation}
where $\lambda>0$ is a tuning parameter. Penalized LS estimators with nuclear norm penalty were considered in several papers starting from \cite{rohde_tsyb,candes_plan,koltchinskii2011nuclear}. For more references, see \cite{koltch_book,giraud2014introduction,vdgeer}.

For
$A\in\R^{k\times m}$,  let $r={\rm
rank} (A)$ denote its rank. By the singular value
decomposition, $A=\sum_{j=1}^r \sigma_j(A)
u_j v_j^\top$ with orthonormal vectors $u_1,\dots, u_r\in {\mathbb
R}^{k}$, orthonormal vectors $v_1,\ldots,v_r\in {\mathbb R}^{m}$
and singular values $\sigma_1(A) \ge\dots\ge \sigma_r(A)>0$. The pair of linear vector spaces
$(S_1,S_2)$ where $S_1$ is the linear span of $\{u_1,\dots, u_r\}$
and $S_2$ is the linear span of $\{v_1,\dots, v_r\}$ will be called
the support of $A.$ We will denote by $S_j^{\perp}$ the
orthogonal complement of $S_j$, $j=1,2$, and by $P_{S}$ the
orthogonal projector on the linear vector space $S$. 
Given $A\in \bR^{k\times m}$ with support $(S_1,S_2),$ and $B\in {\mathbb
R}^{k\times m}$, we set
\begin{equation}\label{eq:proj_op_mat}
{\cal P}_A (B) \triangleq B-P_{S_1^{\perp}}BP_{S_2^{\perp}} \mbox{ and } {\cal
P}_A^{\perp}(B)\triangleq P_{S_1^{\perp}}BP_{S_2^{\perp}}.
\end{equation}

For the norm $\|\cdot\|=\|\cdot\|_*$,  Assumption \ref{assum:decomp} is satisfied with the operator $\cP_A$ defined in \eqref{eq:proj_op_mat}. 
Indeed, it is clear that $\mathcal P_A(A) = A$.
Furthermore, by definition of $\mathcal P_A^\perp$, the columns of $A$ are orthogonal { to the columns of} $\mathcal P_A^\perp(B)$
and the rows of $A$ are orthogonal to the rows of $\mathcal P_A^\perp(B)$.
Thus
$$\|A\|_* + \|\mathcal P_A^\perp(B) \|_* = \|A + \mathcal P_A^\perp(B) \|_*,$$
which means that  the nuclear norm satisfies Assumption  \ref{assum:decomp2} (the decomposability assumption), and a fortiori Assumption  \ref{assum:decomp}.

Oracle inequalities for the estimator \eqref{eq:def-nuclear} follow from Theorem~\ref{th1} and Corollary~\ref{cor1}. In order to apply those results,  one has to find $\tau^\prime$ such that $\bP(\Omega)\geq1/2$ where
\begin{equation}
    \label{def:Omega-matrices}
 \Omega=\left\{\sup_{\norm{B}_*\leq1}\frac{1}{n}\sum_{i=1}^n \xi_i \inr{X_i, B}\leq\tau^\prime\right\} = \left\{\frac{\norm{\Gamma}_{sp}}{\sqrt{n}}\leq \tau^\prime\right\},
 \end{equation}  
 $\Gamma=n^{-1/2}\sum_{i=1}^n \xi_i X_i$, $\norm{\Gamma}_{sp}$ is the spectral norm (i.e., the largest singular value of $\Gamma$), and $\xi_i$ are i.i.d. random variables with distribution $\cN(0,\sigma^2)$.
 The next result from \cite{rohde_tsyb} provides a control of the spectral norm of $\Gamma$. Define
 $$
 \phi_{max} \triangleq
     \sup_{\substack{ A\in\R^{k\times m}: \|A\|_{Fr}=1 \\ \text{ and rank}(A)= 1}}
     \left(
         \frac 1 n \sum_{i=1}^n \langle X_i, A\rangle^2 
     \right)^{1/2}.
 $$
 The quantity $\phi_{max}$ is the maximal rank-1 restricted eigenvalue of the operator $\design$.

\begin{lemma}[Lemma 2 in \cite{rohde_tsyb} with $D=2$]  \label{lem:control_op_norm_gamma}
    Let $k\ge2$ and $m\ge2$.
    Let $X_1,\dots,X_n$ be deterministic matrices in $\R^{k\times m}$
    and let $\xi_1,\dots,\xi_n$ be i.i.d. random variables with distribution $\cN(0,\sigma^2)$.
    If 
 \begin{equation*}
     \tau^\prime\geq 8 \sigma \phi_{max} \sqrt{\frac{k+m}{n}}
       \end{equation*} 
       then for the event $\Omega$ in \eqref{def:Omega-matrices}
  we have $\bP(\Omega)\geq 1-2\exp(-(2-\log 5)(m+k)) \ge 1/2$.
\end{lemma}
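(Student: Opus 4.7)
The plan is a Gaussian-tail-plus-$\varepsilon$-net bound on the spectral norm of the Gaussian random matrix $\Gamma=n^{-1/2}\sum_{i=1}^n\xi_i X_i$. Starting from the variational formula
$$
\|\Gamma\|_{sp}\;=\;\sup_{u\in S^{k-1},\,v\in S^{m-1}}u^\top\Gamma v\;=\;\sup_{\substack{A\in\R^{k\times m}\\\|A\|_{Fr}=1,\,\mathrm{rank}(A)=1}}\langle\Gamma,A\rangle,
$$
I would first observe that, for each fixed rank-one matrix $A$ of unit Frobenius norm, the scalar $\langle\Gamma,A\rangle=n^{-1/2}\sum_{i=1}^n\xi_i\langle X_i,A\rangle$ is centered Gaussian with variance $\sigma^2 n^{-1}\sum_{i=1}^n\langle X_i,A\rangle^2\le\sigma^2\phi_{max}^2$, the last inequality being exactly the definition of $\phi_{max}$. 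The classical Gaussian tail bound therefore gives
$$
\mathbb P\bigl(\langle\Gamma,A\rangle>t\bigr)\le\exp\!\left(-\tfrac{t^2}{2\sigma^2\phi_{max}^2}\right) \qquad \text{for every } t>0.
$$

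The main step is discretization. It is convenient to pass to the symmetric dilation $\widetilde\Gamma=\bigl(\begin{smallmatrix}0&\Gamma\\\Gamma^\top&0\end{smallmatrix}\bigr)\in\R^{(k+m)\times(k+m)}$, which satisfies $\|\widetilde\Gamma\|_{sp}=\|\Gamma\|_{sp}$, and to pick an $\varepsilon$-net $\cN$ of $S^{k+m-1}$ of cardinality at most $5^{k+m}$ (a standard volumetric bound) together with the accompanying discretization inequality $\|\widetilde\Gamma\|_{sp}\le c_*\max_{w\in\cN}|w^\top\widetilde\Gamma w|$ for an absolute constant $c_*$. Since each $w^\top\widetilde\Gamma w$ is (up to a harmless numerical factor) of the form $\langle\Gamma,A(w)\rangle$ for some rank-one $A(w)$ of Frobenius norm at most $1$, a union bound over $\cN$ combined with the pointwise Gaussian tail yields
$$
\mathbb P\bigl(\|\Gamma\|_{sp}>c_*t\bigr)\;\le\;2\cdot 5^{k+m}\exp\!\left(-\tfrac{t^2}{2\sigma^2\phi_{max}^2}\right).
$$

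The last step is purely arithmetic: picking $t$ so that $t^2/(2\sigma^2\phi_{max}^2)=2(k+m)$, i.e.\ $t=2\sigma\phi_{max}\sqrt{k+m}$, and absorbing $c_*$ into the overall multiplicative constant so that the threshold becomes $8\sigma\phi_{max}\sqrt{k+m}$, the previous display reads
$$
\mathbb P\bigl(\|\Gamma\|_{sp}>8\sigma\phi_{max}\sqrt{k+m}\bigr)\;\le\;2\exp\bigl(-(2-\log 5)(k+m)\bigr).
$$
Dividing by $\sqrt{n}$ and invoking the assumption $\tau'\ge 8\sigma\phi_{max}\sqrt{(k+m)/n}$ yields $\mathbb P(\Omega)\ge 1-2\exp(-(2-\log 5)(k+m))$. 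Finally, since $2-\log 5>0.39$ and $k+m\ge 4$, one checks $2e^{-4(2-\log 5)}<1/2$, so that $\mathbb P(\Omega)\ge 1/2$. The probabilistic content of the argument is entirely standard; the only delicate point is matching the net-cardinality base $5$ with the Gaussian-tail exponent $2$ so that the threshold multiplier lands exactly on $8$, and I expect this arithmetic tuning to be the main (if minor) obstacle.
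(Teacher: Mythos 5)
The paper does not prove this lemma at all: it imports it verbatim as Lemma~2 of Rohde and Tsybakov with $D=2$, so there is no internal proof to compare against. Your $\varepsilon$-net argument is, in substance, exactly the proof given in that cited reference, and your arithmetic is the right one: a net of cardinality $5^{k+m}$, a pointwise Gaussian tail with variance at most $\sigma^2\phi_{max}^2$ (correctly extracted from the definition of $\phi_{max}$), and the choice $t=2\sigma\phi_{max}\sqrt{k+m}$ giving $5^{k+m}e^{-2(k+m)}=e^{-(2-\log 5)(k+m)}$, with the final check $2e^{-4(2-\log 5)}<1/2$ valid since $k+m\ge 4$. The one place where your write-up is not self-supporting is the discretization step: you cannot "absorb $c_*$ into the constant," because the threshold $8\sigma\phi_{max}\sqrt{k+m}$ is fixed by the statement, so you need the discretization constant to be exactly $4$ while keeping the net cardinality at $5^{k+m}$. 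Your route (symmetric dilation plus a single $1/2$-net of $S^{k+m-1}$) does not deliver this, since the standard quadratic-form bound $\|\widetilde\Gamma\|_{sp}\le(1-2\varepsilon)^{-1}\max_{w\in\cN}|w^\top\widetilde\Gamma w|$ degenerates at $\varepsilon=1/2$. The fix is to use separate $1/2$-nets of $S^{k-1}$ and $S^{m-1}$ (total cardinality $5^k\cdot 5^m=5^{k+m}$) together with the bilinear bound $\sup_{u,v}u^\top\Gamma v\le 4\max_{u\in\cN_1,v\in\cN_2}u^\top\Gamma v$, obtained by discretizing first in $u$ and then in $v$, each step costing a factor $2$. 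With that substitution your proof is complete and matches the source.
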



In view of this lemma, oracle inequalities for the nuclear norm regularization procedure \eqref{eq:def-nuclear} with tuning parameter  $ \lambda$ satisfying
\begin{equation}\label{eq:trace_param}
    \lambda\ge  80 \sigma \phi_{max}  \sqrt{\frac{k+m}{n}} 
\end{equation} 
can be deduced from Theorem~\ref{th1} and Corollary~\ref{cor1}. We have the following result.

\begin{theorem}\label{theo:trace}
    {Let $k\ge2$ and $m\ge2$.}
Assume that $\vxi\sim \mathcal N(\vzero,\sigma^2 I_{n\times n})$ and that $X_1,\dots ,X_n$ are deterministic matrices.
Let  $\delta\in (0,1)$. The estimator $ \hat A$ defined in \eqref{eq:def-nuclear} with tuning parameter satisfying \eqref{eq:trace_param} is such that, with probability at least $1-\delta$,
\begin{equation*}
\|\design \hat A-\vf\|_n^2\le\min_{A \in \bR^{k\times m} }\Big[\|\design A-\vf\|_n^2+\frac{16}{25}\lambda^2 \mu_4^2 ( A)\Big] + \frac{16\sigma^2(\Phi^{-1}(1-\delta))^2}{n}
\end{equation*}and 
\begin{equation*}
\E \|\design \hat A-\vf\|_n^2\le\min_{ A \in \bR^{k\times m}}\Big[\|\design A-\vf\|_n^2+\frac{16}{25}\lambda^2 \mu_4^2 ( A)\Big] + \frac{16\sigma^2}{n}.
\end{equation*} 
 If, in addition, $\vf =\design A^* $ for some $ A^* \in \bR^{k\times m}$, then with probability at least $1-\delta$,
\begin{equation*}
\| \hat A- A^*\|_*\le4\lambda \mu_4^2 ( A^*)+\frac{20\sigma^2 (\Phi^{-1}(1-\delta))^2}{n \lambda}    
\end{equation*}
and
\begin{equation*}
\E \| \hat A- A^*\|_*\le8\lambda \mu_4^2 ( A^*)+ \frac{20\sigma }{\lambda n}.
\end{equation*}
\end{theorem}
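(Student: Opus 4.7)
The plan is to derive Theorem~\ref{theo:trace} as a direct specialization of the general oracle bounds in Theorem~\ref{th1} and Corollary~\ref{cor1}, so that the work amounts to checking their hypotheses in the nuclear-norm setting. Concretely, I need to verify (a) that the decomposability Assumption~\ref{assum:decomp} holds with a suitable choice of $\mathbb A$ and $\mathcal P_A$, and (b) that the stochastic event $\Omega$ in Theorem~\ref{th1} has probability at least $1/2$ for some $\tau'$ compatible with the hypothesis~\eqref{eq:trace_param} on $\lambda$.

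For step (a), I would take $\mathbb A = \mathbb B = \mathbb R^{k\times m}$ and $\mathcal P_A$ as defined in~\eqref{eq:proj_op_mat}. The discussion just above the theorem statement already verifies that the nuclear norm satisfies the stronger decomposability Assumption~\ref{assum:decomp2} (because the row and column spaces of $\mathcal P_A^\perp(B)$ are orthogonal to those of $A$, yielding $\|A\|_* + \|\mathcal P_A^\perp(B)\|_* = \|A + \mathcal P_A^\perp(B)\|_*$), which implies Assumption~\ref{assum:decomp}.

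For step (b), I would use the duality identity $\sup_{\|B\|_* \le 1}\langle M, B\rangle = \|M\|_{sp}$ to rewrite
\[
\sup_{B\in\mathbb R^{k\times m}:\,\|B\|_* \le 1}\frac{1}{n}\sum_{i=1}^n \xi_i \langle X_i, B\rangle
= \frac{\|\Gamma\|_{sp}}{\sqrt n},
\]
so that the event $\Omega$ of Theorem~\ref{th1} coincides with the one in~\eqref{def:Omega-matrices}. Lemma~\ref{lem:control_op_norm_gamma} then yields $\mathbb P(\Omega) \ge 1/2$ for the choice $\tau' = 8\sigma\phi_{\max}\sqrt{(k+m)/n}$. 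The assumption~\eqref{eq:trace_param} is exactly $\lambda \ge 10\tau'$, matching the requirement of Theorem~\ref{th1}.

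With these two ingredients in hand, the two in-probability oracle inequalities follow immediately from Theorem~\ref{th1} (the prediction bound from \eqref{eq:soi-squared-lasso}, and in the well-specified case the nuclear-norm estimation bound from \eqref{eq:soi-squared-lasso-2}, noting that the norm $\|\cdot\|$ there is precisely $\|\cdot\|_*$). The two bounds in expectation follow at once from Corollary~\ref{cor1}. There is no real obstacle: the entire content of Theorem~\ref{theo:trace} is packaged inside the abstract theorem, and the only case-specific work is the verification of decomposability (already done in the preamble) and the spectral-norm Gaussian concentration supplied by Lemma~\ref{lem:control_op_norm_gamma}.
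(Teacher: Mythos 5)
Your proposal is correct and follows exactly the route the paper intends: the paper gives no separate proof of Theorem~\ref{theo:trace}, deducing it directly from Theorem~\ref{th1} and Corollary~\ref{cor1} once decomposability of the nuclear norm with the projectors \eqref{eq:proj_op_mat} is checked (as in the preamble) and Lemma~\ref{lem:control_op_norm_gamma} supplies $\tau'=8\sigma\phi_{max}\sqrt{(k+m)/n}$ with $\mathbb P(\Omega)\ge 1/2$, so that \eqref{eq:trace_param} is precisely $\lambda\ge 10\tau'$. Your duality identification of $\Omega$ with \eqref{def:Omega-matrices} and the choice $\mathbb A=\mathbb R^{k\times m}$ are exactly what is needed.
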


Finally, we give a bound on the compatibility factor $\mu_{c_0}( A)$ for low rank matrices $ A$
in the case where $X_1,\dots,X_n$ are i.i.d. random matrices. Using Theorem~\ref{theo:small_ball_re} we obtain 
the following result.

\begin{proposition}\label{prop:constant_mu_nuclear}
Let $L\ge 1$. Let $X_1, \ldots, X_n$ be i.i.d. realizations of a random matrix $X$ with values in $\R^{k\times m}$ such that 
\begin{itemize}
    \item[(i)] $X$ is isotropic: $\E \inr{X,  A}^2 = \| A\|_{Fr}^2$ for all $ A\in\R^{k\times m}$,    
    \item[(ii)] $X$ is $L$-subgaussian: $\bE\exp\left(t\inr{X, A}\right)\leq \exp(L^2 t^2/2)$ for all $t>0 $ and all $ A\in\R^{k\times m}$ such that $\| A\|_{Fr}=1$.
    \end{itemize}
    Let $s\in\{1, \ldots, \min(k,m)\}$ and $c_0>0$. There exist positive constants $c(L)$ and $c'(L)$ depending only on~$L$ such that the following holds. If 
\begin{equation}
    \label{condition-n-s-k-m}
    n\geq c(L)(1+c_0)^2 s \max(k,m),
\end{equation} 
then with probability greater than $1-\exp(-c'(L)n)$, for any $ A\in\R^{k\times m}$ such that ${\rm rank}( A)\leq s$ we have
\begin{equation*}
    \mu_{c_0}( A)\leq 32 L^2\sqrt{{\rm rank}(A)}.
\end{equation*}
\end{proposition}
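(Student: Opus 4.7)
The plan is to imitate, verbatim in structure, the proofs of the analogous Proposition \ref{prop:constant_mu_lasso} and its group-LASSO counterpart, substituting the nuclear-norm geometry in the appropriate places. Since $X$ is isotropic and $L$-subgaussian, Proposition \ref{prop:subgaussian} hands us, for free, that $X$ satisfies the small ball assumption with parameters $\beta_0=1/\sqrt{2}$ and $\kappa_0=1/(64 L^4)$, so that $(8/(\beta_0^2\kappa_0))^{1/2}=32L^2$. Once \eqref{eq:minimal_n_re} is verified for $\bA=\Sigma_s\triangleq\{A\in\R^{k\times m}:\mathrm{rank}(A)\leq s\}$, the bound \eqref{eq:condi_mu} together with Theorem \ref{theo:small_ball_re} reduces the problem to a deterministic ratio and a Gaussian-mean-width estimate.

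For the deterministic ratio, I would use the algebraic identity
\begin{equation*}
    \cP_A(B)=B-P_{S_1^\perp}BP_{S_2^\perp}=P_{S_1}B+P_{S_1^\perp}BP_{S_2},
\end{equation*}
valid for every $B\in\R^{k\times m}$, where $(S_1,S_2)$ is the support of $A$. The two summands have rank at most $r=\mathrm{rank}(A)$, so $\mathrm{rank}(\cP_A B)\leq 2r$; and because the two summands have orthogonal column spaces, $\|\cP_A B\|_{Fr}\leq\|B\|_{Fr}$. Combined with the inequality $\|M\|_*\leq\sqrt{\mathrm{rank}(M)}\|M\|_{Fr}$ this yields $\|\cP_A B\|_*\leq\sqrt{2r}\,\|B\|_{Fr}$, which feeds into \eqref{eq:condi_mu} to produce a bound of order $L^2\sqrt{\mathrm{rank}(A)}$ (matching the stated $32L^2\sqrt{\mathrm{rank}(A)}$ up to a harmless numerical factor that can be absorbed in $c(L)$).

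For the expectation bound \eqref{eq:minimal_n_re}, first show the inclusion
\begin{equation*}
    S_{Fr}\cap\Bigl(\bigcup_{A\in\Sigma_s}\bC_{A,c_0}\Bigr)\subset \bC
    \triangleq\bigl((1+c_0)\sqrt{2s}\,B_*\bigr)\cap B_{Fr},
\end{equation*}
where $S_{Fr},B_{Fr}$ are the Frobenius unit sphere and ball and $B_*$ is the nuclear-norm unit ball. The inclusion follows from $\|B\|_*\leq(1+c_0)\|\cP_A B\|_*\leq(1+c_0)\sqrt{2r}\|B\|_{Fr}$ for $B\in\bC_{A,c_0}$. Because the nuclear norm is not amenable to Theorem \ref{theo:shahar} (its unit ball is not defined by an unconditional norm in the entry basis), I would instead apply the subgaussian version, Proposition \ref{prop:subgaussian}, obtaining
\begin{equation*}
    \E\sup_{B\in\bC}\Bigl|\frac{1}{n}\sum_{i=1}^n\eps_i\inr{X_i,B}\Bigr|\leq \frac{c_2 L\,\ell^*(\bC)}{\sqrt{n}}.
\end{equation*}
The Gaussian mean width is controlled by $\ell^*(\bC)\leq(1+c_0)\sqrt{2s}\,\E\|G\|_{sp}$, where $G\in\R^{k\times m}$ has i.i.d. $\cN(0,1)$ entries; the classical bound $\E\|G\|_{sp}\leq\sqrt{k}+\sqrt{m}$ then gives $\ell^*(\bC)\lesssim(1+c_0)\sqrt{s\max(k,m)}$.

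Enforcing the right-hand side to be at most $\beta_0\kappa_0/16$ yields condition \eqref{condition-n-s-k-m} with $c(L)$ depending only on $L$, and Theorem \ref{theo:small_ball_re} then delivers the conclusion on the event of probability at least $1-\exp(-n\kappa_0^2/32)$, so one may take $c'(L)=\kappa_0^2/32=1/(2\cdot 64^2 L^8)$. The only genuinely non-routine step is the Gaussian-width estimate for low-rank matrices, but it reduces cleanly to the well-known spectral-norm bound for a Gaussian matrix; every other step is a direct transcription, with $P_{S_1},P_{S_2}$ in place of coordinate projections, of the LASSO and group-LASSO arguments already carried out in the paper.
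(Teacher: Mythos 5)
Your proof follows the paper's own argument essentially step for step: Proposition~\ref{prop:subgaussian} for the small ball parameters and the subgaussian mean-width bound, Theorem~\ref{theo:small_ball_re} together with \eqref{eq:condi_mu} to reduce to the deterministic ratio plus \eqref{eq:minimal_n_re}, the inclusion of the cone union into $\bigl((1+c_0)\sqrt{s}\,B_*\bigr)\cap B_2^{km}$ (up to your extra $\sqrt 2$), and $\ell^*(B_*)=\E\|G\|_{sp}\le\sqrt k+\sqrt m$. The one substantive difference is that you actually justify the key ratio bound via $\cP_A(B)=P_{S_1}B+P_{S_1^\perp}BP_{S_2}$, obtaining $\|\cP_A B\|_*\le\sqrt{2\,\mathrm{rank}(A)}\,\|B\|_{Fr}$, whereas the paper asserts $\|\cP_A B\|_*\le\sqrt{\mathrm{rank}(A)}\,\|B\|_{Fr}$ without the factor $\sqrt 2$; your version is the correct one (e.g.\ $A=e_1e_1^\top$, $B=e_1e_2^\top+e_2e_1^\top$ gives ratio $\sqrt 2$), so you are in effect repairing a small slip in the paper. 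Note, however, that this $\sqrt 2$ cannot be ``absorbed in $c(L)$'' everywhere as you claim: it is harmless in the sample-size condition \eqref{condition-n-s-k-m}, but it propagates through \eqref{eq:condi_mu} into the conclusion, so what you prove is $\mu_{c_0}(A)\le 32\sqrt 2\,L^2\sqrt{\mathrm{rank}(A)}$ rather than the stated $32L^2\sqrt{\mathrm{rank}(A)}$ --- a cosmetic discrepancy in the displayed constant, not a gap in the argument.
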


\begin{proof}
    Since $X$ is $L$-subgaussian and isotropic, it follows from Proposition~\ref{prop:subgaussian}
    that $X$ satisfies the small ball assumption with parameters
    $\beta_0,\kappa_0$ defined in \eqref{sb-params}.

Denote by $M_s$ the set of all matrices in $\R^{k\times m}$ with rank at most $s$.
For any $ A\in\R^{k\times m}$ we have
$$\bC_{ A,c_0}=\left\{ A^\prime\in\R^{k\times m}:\norm{\cP_ A^\perp A^\prime }_*\leq c_0 \norm{\cP_{ A} A^\prime}_*\right\}
$$
where $\cP_A$ is the operator defined in \eqref{eq:proj_op_mat}. It follows from Theorem~\ref{theo:small_ball_re} and \eqref{eq:condi_mu} that, if \eqref{eq:minimal_n_re} holds with $\bA=M_s$,  then with probability at least $1-\exp(-n\kappa_0^2/32)$, for all $A\in M_s$ we have
\begin{equation*}
    \mu_{c_0}( A)
    \leq
    \left(\frac{8}{\beta_0^2\kappa_0}\right)^{1/2}\sup_{ A^\prime\in\bC_{ A,c_0}}\frac{\norm{\cP_A  A^\prime}_*}{\| A^\prime\|_{Fr}}
    \leq 
    \sqrt{\frac{8{\rm rank}( A)}{\beta_0^2\kappa_0} } = 32 L^2 \sqrt{{\rm rank}( A)} 
\end{equation*}
since $\norm{\cP_A A^\prime}_*\leq \sqrt{{\rm rank}( A)}\| A^\prime\|_{Fr}$ for all $ A^\prime\in\R^{k\times m}$. 

Therefore, it only remains to prove that \eqref{condition-n-s-k-m} implies \eqref{eq:minimal_n_re}  with $\bA=M_s$. Denote by $S_2^{km-1}$ and  $B_2^{k m}$ the unit Euclidean sphere and the unit Euclidean ball in $\R^{k\times m}$, respectively, and by $B_*$ the unit ball in $\R^{k\times m}$ with respect to the nuclear norm. It is straightforward to check that $S_2^{km-1}\cap \left(\cup_{ A\in M_s}\bC_{ A, c_0}\right)\subset \bC$ where $\bC= \left((1+c_0)\sqrt{s}B_* \right)\cap B_2^{km}$. 
Proposition~\ref{prop:subgaussian} yields that
\begin{align*}
   \E \sup_{ A\in S_2^{km-1}\cap \left(\cup_{ A\in M_s}\bC_{ A, c_0}\right)}\left|\frac{1}{\sqrt n}\sum_{i=1}^n\eps_i \inr{X_i,  A}\right|
    &\leq
    c_2 L\ell^*\left(
        S_2^{km-1}\cap \left(\cup_{ A\in M_s}\bC_{ A, c_0}\right)
    \right)\\
    &\le
    c_2 L\ell^*(\mathbb C).
\end{align*}
Next, by inclusion we have 
$\ell^*(\bC) \leq (1+c_0)\ell^*(\sqrt{s}B_*\cap B_2^{km})\leq (1+c_0)\sqrt{s}\ell^*(B_*)$.
By duality,  $\ell^*(B_*) = \E\|G\|_{sp}$ where $G$ is a random matrix with i.i.d. $\cN(0,1)$ entries. In addition, 
$\E\|G\|_{sp}\le \sqrt k + \sqrt m$, cf. \cite{davidson_szarek}.
Thus, for large enough constant $c(L)>0$, condition
\eqref{condition-n-s-k-m} implies \eqref{eq:minimal_n_re} with $\bA=M_s$.
\end{proof}

Combining Theorem~\ref{theo:trace} and Proposition~\ref{prop:constant_mu_nuclear} we can obtain oracle inequalities for the estimator $ \hat A$ defined in \eqref{eq:def-nuclear} when $X_1,\dots,X_n$ are i.i.d. random matrices {independent of the noise vector $\vxi$}. We illustrate it by the following result for the basic example where the entries of each of the matrices $X_i$ are i.i.d. standard Gaussian.

\begin{theorem}\label{theo:trace:random}
Assume that $\vxi\sim \mathcal N(\vzero,\sigma^2 I_{n\times n})$ and that $X_1,\dots,X_n$ are i.i.d. realizations of a random matrix $X$ whose entries are i.i.d. standard Gaussian
random variables. {We also assume that $X_1,\dots,X_n$ are independent of the noise vector $\vxi$}. 
Let  $\delta\in (0,1)$, $k\ge 2$, $m\ge 2$, and 
\begin{equation}\label{eq:trace_param1}
\lambda =  a \sigma  \sqrt{\frac{k+m}{n}}
\end{equation}
with $a\ge 120$. There exist an absolute constant $C_3>0$ and a constant $C_4>0$ depending only on a such that the following holds.
If $ n\geq C_3  s \max(k,m)$, then for
the estimator $ \hat A$ defined in \eqref{eq:def-nuclear}  with tuning parameter  \eqref{eq:trace_param1} we have that, with probability at least $1-\delta- e^{-n/C_4}$,
\begin{eqnarray*}
\|\design \hat A-\vf\|_n^2&\le&\min_{A \in \R^{k\times m}: {\rm rank}(A)\leq s }\Big(\|\design A-\vf\|_n^2+ C_4\frac{\sigma^2{\rm rank}(A)(k+m)}{n} \Big)
\\ &&
\qquad +\ 
 \frac{16\sigma^2(\Phi^{-1}(1-\delta))^2}{n}.
\end{eqnarray*}
If, in addition, $\vf =\design A^* $ for some $A^* \in \R^{k\times m}$ such that ${\rm rank}(A^*)\leq s$, then with probability at least $1-\delta-e^{-n/C_4}$,
\begin{equation*}\label{eq:soitrace-2}
\| \hat A- A^*\|_*\le C_4 \sigma \left(s\sqrt{\frac{k+m}{n}} +\frac{ (\Phi^{-1}(1-\delta))^2}{\sqrt{(k+m)n}} \right).   
\end{equation*}
\end{theorem}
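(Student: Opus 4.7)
The plan is to apply Theorem~\ref{theo:trace} conditionally on $X_1,\ldots,X_n$, controlling the compatibility factor via Proposition~\ref{prop:constant_mu_nuclear} and verifying the tuning-parameter condition \eqref{eq:trace_param} with high probability over the random design.

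First, since $X$ has i.i.d.\ $\cN(0,1)$ entries, it is isotropic and $1$-subgaussian, so Proposition~\ref{prop:constant_mu_nuclear} applies with $L=1$ and $c_0=4$. Provided $C_3 \ge 25\,c(1)$, the assumption $n \ge C_3 s \max(k,m)$ implies \eqref{condition-n-s-k-m}, so on an event $\mathcal E_1$ of probability at least $1-\exp(-c'(1)n)$, every matrix $A$ with $\mathrm{rank}(A)\le s$ satisfies $\mu_4^2(A) \le 1024\,\mathrm{rank}(A)$.

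Second, I would verify \eqref{eq:trace_param}, which with the choice \eqref{eq:trace_param1} reduces to $\phi_{\max} \le a/80$. Writing
\[
\phi_{\max}^2 = \sup_{u\in S^{k-1},\,v\in S^{m-1}} \frac{1}{n}\sum_{i=1}^n (u^\top X_i v)^2,
\]
one controls this supremum by a standard $\varepsilon$-net or chaining argument over $S^{k-1}\times S^{m-1}$: for any fixed unit $u,v$ the variable $u^\top X_i v$ is $\cN(0,1)$, so chi-square concentration together with a union bound over product $\varepsilon$-nets of cardinalities $e^{O(k)}$ and $e^{O(m)}$ yields $\phi_{\max} \le 1 + C\sqrt{(k+m)/n}$ on an event $\mathcal E_2$ of probability at least $1-e^{-c''n}$. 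Enlarging $C_3$ if necessary forces the right-hand side below $3/2$, and $a \ge 120$ then gives $\phi_{\max} \le a/80$ on $\mathcal E_2$, so that \eqref{eq:trace_param} is satisfied.

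On $\mathcal E_1 \cap \mathcal E_2$ the deterministic hypotheses of Theorem~\ref{theo:trace} are met. Because $\vxi$ is independent of $X_1,\ldots,X_n$, applying Theorem~\ref{theo:trace} conditionally on the design and integrating delivers each inequality with probability at least $1-\delta-e^{-n/C_4}$ for a suitable $C_4$. Restricting the infimum to $\mathrm{rank}(A)\le s$ and inserting $\mu_4^2(A)\le 1024\,\mathrm{rank}(A)$ converts $\tfrac{16}{25}\lambda^2\mu_4^2(A)$ into $C_4\sigma^2\,\mathrm{rank}(A)(k+m)/n$ after absorbing $a^2$, giving the prediction bound. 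The estimation bound is obtained from \eqref{eq:soi-squared-lasso-2} by substituting $\lambda = a\sigma\sqrt{(k+m)/n}$ and $\mu_4^2(A^*)\le 1024\,s$. The main obstacle is the second step: controlling $\phi_{\max}$ is not directly supplied by the preceding material and requires its own rank-$1$ restricted isometry-type estimate for the Gaussian measurement ensemble, playing here the role that the simple concentration bound \eqref{eq:soilasso-21} plays in the proof of Theorem~\ref{theo:lasso:random}.
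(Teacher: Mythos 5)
Your proposal is correct and follows essentially the same route as the paper: apply Proposition~\ref{prop:constant_mu_nuclear} with $L=1$ to bound $\mu_4$, check that the Gaussian ensemble satisfies \eqref{eq:trace_param} with high probability, and then invoke Theorem~\ref{theo:trace} conditionally on the design. The only difference is in the control of $\phi_{max}$: where you sketch a self-contained $\varepsilon$-net argument, the paper simply cites the nearly-isometric-map property of the Gaussian measurement ensemble (Lemma~4.3 of \cite{MR2680543}) to get $\phi_{max}\le 3/2$ with probability at least $1-e^{-n/C_5}$; both yield the same conclusion.
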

\begin{proof} Under the assumptions of the theorem, $\mathbb X$ is a nearly isometric linear map, cf. \cite{MR2680543}.  Then, it follows from \cite[Lemma 4.3]{MR2680543} that there exists an absolute constant $C_5>0$ such that $\phi_{max}\le 3/2$ with probability at least $1-e^{-n/C_5}$. Therefore, we can use Theorem~\ref{theo:trace} with $\lambda$ defined in \eqref{eq:trace_param1}. Plugging the bound on $\mu_4$ from Proposition~\ref{prop:constant_mu_nuclear}  in the oracle inequalities in deviation from Theorem~\ref{theo:trace} we obtain the result.
\end{proof}

 \begin{footnotesize}
\bibliographystyle{plain}
\bibliography{convex_SMF_bib}

\begin{thebibliography}{10}

\bibitem{bellec2016prediction}
P.~C. Bellec, A.~S. Dalalyan, E.~Grappin, and Q.~Paris.
\newblock On the prediction loss of the {L}asso in the partially labeled
  setting.
\newblock {\em arXiv preprint arXiv:1606.06179}, 2016.

\bibitem{bellec2016slope}
P.~C. Bellec, G.~Lecu{\'e}, and A.~B. Tsybakov.
\newblock Slope meets {L}asso: improved oracle bounds and optimality.
\newblock {\em arXiv preprint arXiv:1605.08651}, 2016.

\bibitem{bel-tsyb}
P.~C. Bellec and A.~B. {Tsybakov}.
\newblock Bounds on the prediction error of penalized least squares estimators
  with convex penalty.
\newblock In V.~Panov, editor, {\em Festschrift for Valentin Konakov}.
  Springer, 2017, {\it arXiv preprint arXiv:1609.06675}.

\bibitem{bickel2009simultaneous}
P.~J. Bickel, Y.~Ritov, and A.~B. Tsybakov.
\newblock Simultaneous analysis of {L}asso and {D}antzig selector.
\newblock {\em Ann. Statist.}, 37:1705--1732, 2009.

\bibitem{MR3185193}
S.~Boucheron, G.~Lugosi, and P.~Massart.
\newblock {\em Concentration inequalities}.
\newblock Oxford University Press, Oxford, 2013.

\bibitem{candes_plan}
E.~Candes and Y.~Plan.
\newblock Tight oracle bounds for low-rank matrix recovery from a mininal
  number of noisy random measurements.
\newblock {\em IEEE Transactions on Information Theory}, 57:2342--2359, 2011.

\bibitem{davidson_szarek}
K.R. Davidson and S.J. Szarek.
\newblock Local operator theory, random matrices and {B}anach spaces.
\newblock In {\em Handbook of the geometry of {B}anach spaces}, volume~I, pages
  317--366. North-Holland, Amsterdam, 2001.

\bibitem{MR1666908}
V.H. de~la Pe{\~n}a and E.~Gin{\'e}.
\newblock {\em Decoupling}.
\newblock Springer, New York, 1999.

\bibitem{Dirksen2015OnTG}
S.~Dirksen, G.~Lecu{\'e}, and H.~Rauhut.
\newblock On the gap between {RIP}-properties and sparse recovery conditions.
\newblock {\em To appear in IEEE Transactions on Information Theory}, 2015.

\bibitem{MR1932358}
R.~M. Dudley.
\newblock {\em Real analysis and probability}, volume~74 of {\em Cambridge
  Studies in Advanced Mathematics}.
\newblock Cambridge University Press, Cambridge, 2002.

\bibitem{giraud2014introduction}
C.~Giraud.
\newblock {\em Introduction to {H}igh-dimensional {S}tatistics}.
\newblock CRC Press, 2014.

\bibitem{koltch_book}
V.~Koltchinskii.
\newblock {\em Oracle inequalities in empirical risk minimization and sparse
  recovery problems}, volume 2033 of {\em Lecture Notes in Math.}
\newblock Springer, Heidelberg, 2011.

\bibitem{koltchinskii2011nuclear}
V.~Koltchinskii, K.~Lounici, and A.~B. Tsybakov.
\newblock Nuclear-norm penalization and optimal rates for noisy low-rank matrix
  completion.
\newblock {\em Ann. Statist.}, 39:2302--2329, 2011.

\bibitem{MR3431642}
V.~Koltchinskii and S.~Mendelson.
\newblock Bounding the smallest singular value of a random matrix without
  concentration.
\newblock {\em Int. Math. Res. Not. IMRN}, 23:12991--13008, 2015.

\bibitem{reg1}
G.~Lecu{\'e} and S.~Mendelson.
\newblock Regularization and the small ball method {I}: sparse recovery.
\newblock Technical report, CNRS and Technion, 2016.

\bibitem{MR2814399}
M.~Ledoux and M.~Talagrand.
\newblock {\em Probability in {B}anach {S}paces}.
\newblock Springer, Berlin, 2011.

\bibitem{lifshits}
M.~Lifshits.
\newblock {\em Lectures on {G}aussian {P}rocesses}.
\newblock Springer, Heidelberg, 2012.

\bibitem{lptv2011}
K.~Lounici, M.~Pontil, A.B. Tsybakov, and S.~van~de Geer.
\newblock Oracle inequalities and optimal inference under group sparsity.
\newblock {\em Ann. Statist.}, 39:2164 -- 2204, 2011.

\bibitem{luu2017sharp}
Tung~Duy Luu, Jalal Fadili, and Christophe Chesneau.
\newblock Sharp oracle inequalities for low-complexity priors.
\newblock {\em arXiv preprint 1702.03166}, 2017.

\bibitem{MR3364699}
S.~Mendelson.
\newblock A remark on the diameter of random sections of convex bodies.
\newblock In {\em Geometric aspects of functional analysis}, volume 2116 of
  {\em Lecture Notes in Math.}, pages 395--404. Springer, 2014.

\bibitem{MR3367000}
S.~Mendelson.
\newblock Learning without concentration.
\newblock {\em J. ACM}, 62:Art. 21, 25, 2015.

\bibitem{shahar_expect}
S.~Mendelson.
\newblock On multiplier processes under weak moment assumptions.
\newblock Technical report, Technion, 2016.

\bibitem{negahban2012}
S.~Negahban, P.~Ravikumar, M.~Wainwright, and B.~Yu.
\newblock A unified framework for high-dimensional analysis of {M}-estimators
  with decomposable regularizers.
\newblock {\em Statistical Science}, 4:538---557, 2012.

\bibitem{peypouquet2015convex}
J.~Peypouquet.
\newblock {\em Convex Optimization in Normed Spaces: Theory, Methods and
  Examples}.
\newblock Springer, New York, 2015.

\bibitem{MR2680543}
B.~Recht, M.~Fazel, and P.~A. Parrilo.
\newblock Guaranteed minimum-rank solutions of linear matrix equations via
  nuclear norm minimization.
\newblock {\em SIAM Rev.}, 52:471--501, 2010.

\bibitem{rohde_tsyb}
A.~Rohde and A.B. Tsybakov.
\newblock Estimation of high-dimensional low-rank matrices.
\newblock {\em Ann. Statist.}, 39:887--930, 2011.

\bibitem{MR3184689}
M.~Talagrand.
\newblock {\em Upper and lower bounds for stochastic processes}.
\newblock Springer, Heidelberg, 2014.

\bibitem{taylor2013}
J.~Taylor.
\newblock The geometry of least squares in the 21st century.
\newblock {\em Bernoulli}, 19:1449--1464, 2013.

\bibitem{vaiter2015model}
Samuel Vaiter, Mohammad Golbabaee, Jalal Fadili, and Gabriel Peyr{\'e}.
\newblock Model selection with low complexity priors.
\newblock {\em Information and Inference: A Journal of the IMA}, 4(3):230--287,
  2015.

\bibitem{vdgeer}
S.~van~de Geer.
\newblock {\em Estimation and Testing under Sparsity}.
\newblock Springer, Heidelberg, 2016.

\bibitem{MR1385671}
A.~W. van~der Vaart and J.~A. Wellner.
\newblock {\em Weak convergence and empirical processes}.
\newblock Springer, New York, 1996.

\bibitem{handel_lecture_notes}
R.~van Handel.
\newblock Probability in high dimension.
\newblock Technical report, Princeton University, 2014.

\end{thebibliography}
\end{footnotesize}

\end{document}